 \documentclass[11pt,reqno]{amsart}
\usepackage{amsmath,amsthm,amssymb,bm,bbm,dsfont}
\usepackage{mathtools}\mathtoolsset{centercolon}
\usepackage[shortlabels]{enumitem}
\usepackage[mathscr]{eucal}
\usepackage{amsaddr}
\usepackage{upgreek}
\usepackage[left=2cm,right=2cm,top=2cm,bottom=2cm]{geometry}
\mathtoolsset{showonlyrefs=true}
\usepackage{setspace}
\usepackage{graphicx}
\usepackage{verbatim}
\usepackage{float}
\usepackage{placeins}
\usepackage{array, soul}
\usepackage{booktabs}
\usepackage{threeparttable}
\usepackage[update,prepend]{epstopdf}
\usepackage{multirow}
\usepackage{amsfonts,amssymb,dsfont}
\usepackage[abs]{overpic}
\usepackage{xfrac}
\usepackage{enumitem}

\usepackage[usenames,dvipsnames]{color}
\usepackage{colortbl}
\usepackage[hidelinks]{hyperref}
\hypersetup{
	unicode=false,          % non-Latin characters in Acrobat’s bookmarks
	pdftoolbar=true,        % show Acrobat’s toolbar?
	pdfmenubar=true,        % show Acrobat’s menu?
	pdffitwindow=false,     % window fit to page when opened
	pdfstartview={FitH},    % fits the width of the page to the window
	pdftitle={Sharp Tilting Inequalities},    % title
	pdfauthor={Hao-Chung Cheng and Po-Chieh Liu},     % author
	pdfsubject={Sharp trace inequalities and their infinite-dimensional extensions},   % subject of the document
	pdfkeywords={trace inequalities, logarithmic derivative, trace-class operators}, % list of keywords
	pdfnewwindow=true,      % links in new PDF window
	colorlinks=true,        % false: boxed links; true: colored links
	linkcolor=Red,          % color of internal links (change box color with linkbordercolor)
	citecolor=ForestGreen,  % color of links to bibliography
	filecolor=Magenta,      % color of file links
	urlcolor=BlueViolet,    % color of external links
}
\usepackage{doi}
\usepackage{url}
\usepackage{caption, subcaption}
\usepackage{enumitem}
\usepackage{shadethm}

\makeatletter
\ifx\@NODS\undefined%

\let\mathbb=\mathds
\else%
\fi
\makeatother

\DeclareMathOperator{\Tr}{Tr}

\newcommand{\be}{{\mathbf e}}

\def\0{{\mathbf{0}}}
\def\1{{\mathbf{1}}}
\def\2{{\mathbf{2}}}
\def\3{{\mathbf{3}}}
\def\4{{\mathbf{4}}}
\def\5{{\mathbf{5}}}
\def\6{{\mathbf{6}}}

\def\7{{\mathbf{7}}}
\def\8{{\mathbf{8}}}
\def\9{{\mathbf{9}}}

\def\be{\begin{equation}}
\def\ee{\end{equation}}
\def\bea{\begin{eqnarray}}
\def\eea{\end{eqnarray}}

\DeclareMathOperator{\D}{D}
\DeclareMathOperator{\supp}{supp}
\DeclareMathOperator{\ran}{ran}
\newcommand{\dd}{\,\mathrm d}
\newcommand{\ip}[2]{\langle #1,#2\rangle}
\newcommand{\norm}[1]{\lVert #1\rVert}

\theoremstyle{plain}
\newshadetheorem{theo}{Theorem}

\newshadetheorem{prop}[theo]{Proposition} %[section]
\newtheorem{lemm}[theo]{Lemma} %[section]
\newtheorem{coro}[theo]{Corollary} %[section]
\theoremstyle{definition}
\theoremstyle{remark}
\newtheorem{remark}{Remark}[section]

\newcommand{\vertiii}[1]{{\left\vert\kern-0.25ex\left\vert\kern-0.25ex\left\vert #1
		\right\vert\kern-0.25ex\right\vert\kern-0.25ex\right\vert}}

\makeatletter
\newcommand{\opnorm}{\@ifstar\@opnorms\@opnorm}
\newcommand{\@opnorms}[1]{%
	\left|\mkern-1.5mu\left|\mkern-1.5mu\left|
	#1
	\right|\mkern-1.5mu\right|\mkern-1.5mu\right|
}
\newcommand{\@opnorm}[2][]{%
	\mathopen{#1|\mkern-1.5mu#1|\mkern-1.5mu#1|}
	#2
	\mathclose{#1|\mkern-1.5mu#1|\mkern-1.5mu#1|}
}
\makeatother

\begin{document}

%%%%% How to disable amsart.cls to capitalize the article title?
\let\origmaketitle\maketitle
\def\maketitle{
	\begingroup
	\def\uppercasenonmath##1{} % this disables uppercasing title
	\let\MakeUppercase\relax % this disables uppercasing authors
	\origmaketitle
	\endgroup
}
%%%%%%%%%%%%%%%%%%%%%%%%%%%%%%%%

\title{\bfseries \Large{Sharp Tilting Inequalities
		}}

\author{ \normalsize 
{Hao-Chung Cheng}$^{1\text{--}5}$
and
{Po-Chieh Liu}$^{1,2}$
}
\address{\small  	
$^1$Department of Electrical Engineering and Graduate Institute of Communication Engineering,\\ National Taiwan University, Taipei 106, Taiwan (R.O.C.)\\
$^2$Department of Mathematics, National Taiwan University\\
$^3$Center for Quantum Science and Engineering,  National Taiwan University\\
$^4$Hon Hai (Foxconn) Quantum Computing Center, New Taipei City 236, Taiwan (R.O.C.)\\
$^5$Physics/Math Division, National Center for Theoretical Sciences, Taipei 10617, Taiwan (R.O.C.)\\
}

\email{\href{mailto:haochung.ch@gmail.com}{haochung.ch@gmail.com}}

\date{\today}

\begin{abstract} 
For positive semidefinite matrices $A,B$ and $1/2\le\alpha\le1$, we establish
\[
\Tr\!\left[A\,\D\log[A^\alpha+B^\alpha](B^\alpha)\right]
\le s^s(1-s)^{1-s}\Tr[A^\alpha B^{1-\alpha}],
\qquad s=\frac{1-\alpha}{\alpha},
\]
with the optimal coefficient.
The proof uses a superoperator representation of the directional derivative of the logarithm and
an Araki-type estimate for the derivative of a fractional power.
Both this inequality and its symmetric-quotient counterpart extend to positive trace-class $A$
and positive, possibly unbounded, self-adjoint $B$ on a separable Hilbert space.
\end{abstract}

%\subjclass{Primary 15A42, 15A45, 15A60, 47A60}
%
%\keywords{
%Araki--Lieb--Thirrling inequality,
%Golden--Thompson inequality,
%trace inequality
%}

\maketitle

%\tableofcontents

\section{Introduction}

For a real continuously differentiable function $f$ on $(0,\infty)$,
$\D f[X](H)$ denotes its Fr\'echet derivative at $X>0$ in a Hermitian
direction $H$; we abbreviate $\D_s[X]=\D(x^s)[X]$ throughout the paper.
On the matrix space, we denote the Hilbert--Schmidt inner product
$\ip{H}{K}=\Tr[H^*K]$. Logarithms are natural, zero powers denote
support projections, and $s^s(1-s)^{1-s}$ is interpreted as $1$ at
$s=0,1$.

For positive semidefinite matrices $A$ and $B$, and any $\alpha \in [1/2,1]$, $s = \frac{1-\alpha}{\alpha}$, we prove the following inequality (Theorem~\ref{thm:log})
\begin{align} \label{eq:sharp_tilting}
\Tr\!\left[A\,\D\log[A^\alpha+B^\alpha](B^\alpha)\right]
\le s^s(1-s)^{1-s}\Tr[A^\alpha B^{1-\alpha}].
\end{align}
The inequality becomes an equality when $(1-s) A^{\alpha} = s B^{\alpha}$.
This extends the scalar inequality
\begin{equation}\label{eq:scalar-intro}
	\frac{x}{1+x}\le s^s(1-s)^{1-s}x^s,
	\qquad x>0,\quad 0\le s\le1
\end{equation}
to noncommutative matrices and improves the coefficient obtained in the authors' previous work based on the operator layer cake theorem \cite[Proposition 2.3]{ChengLiu}.
The operational bounds relying on the tilting inequality in \cite{ChengLiu} can thereby be improved.

Our technique is to exploit a superoperator representation \cite[(3.12)]{Bathory}, \cite{Bhatia}
\begin{align}\label{eq:log-average-intro}
	\D\log[X](H)
	&=\int_0^1\mathcal L_v(X)^{-1}(H)\dd v,
	\qquad
	\mathcal L_v(X)(H)=(1-v)XH+vHX
\end{align}
for expressing the quotient.
Moreover, by defining the induced superoperator for $X,Y>0$,
\begin{equation}\label{eq:gamma}
	\begin{split}
%		\mathcal L_v(X)(H)&=(1-v)XH+vHX,\\
		\Gamma_{v,Y\mid X}
		&\coloneqq\mathcal L_v(X)^{-1/2}\mathcal L_v(Y)\mathcal L_v(X)^{-1/2},
		\qquad 0\le v\le1,
	\end{split}
\end{equation}
a key intermediate term can be written as 
\begin{align}
	\Tr A-
	\Tr\!\left[A^{(1+\alpha)/2}\D\log[A^\alpha+B^\alpha]
	(A^{(1+\alpha)/2})\right]
	&=\int_0^1\ip{\sqrt A}{
		\Gamma_{v,B^\alpha\mid A^\alpha}
		(I+\Gamma_{v,B^\alpha\mid A^\alpha})^{-1}(\sqrt A)}\dd v.
\end{align}
%\begin{align}
%	\D\log[X+Y](Y) = \int_0^1 \Gamma_{v,Y\mid X} \left( I + \Gamma_{v,Y\mid X} \right)^{-1} (I) \dd v.
%\end{align}
The superoperator in the integrand hence mimics the role of $\frac{x}{1+x}$ in the scalar inequality \eqref{eq:scalar-intro}.\footnote{
Indeed, using \eqref{eq:gamma}, we can express Umegaki relative entropy for density operators $\rho$ and $\sigma$ with $\mathrm{supp}(\rho)\subseteq\mathrm{supp}(\sigma)$
as  $D(\rho\|\sigma) := \Tr[\rho (\log \rho - \log \sigma)]
= \int_0^1 \left\langle \sqrt{\sigma}, \Gamma_{v,\rho\mid \sigma} \log (\Gamma_{v,\rho\mid \sigma})(\sqrt{\sigma})\right\rangle\dd v$, via Frenkel's formula \cite{Frenkel, Frenkel_equivalent}.
Similarly, the \emph{integral $f$-divergence} in \cite{f-divergence, some_properties, LiuHircheCheng} admits the formula
$D_f(\rho\|\sigma) = \int_0^1 \left\langle \sqrt{\sigma}, f(\Gamma_{v,\rho\mid \sigma})(\sqrt{\sigma})\right\rangle\dd v$ for some proper convex function $f$.
}
This then allows us to \emph{lift} the scalar inequality to a superoperator representation.

In Section~\ref{sec:power_estimate}, we first provide necessary intermediate inequalities for proving \eqref{eq:sharp_tilting}.
We present the main proof of the sharp tilting inequality in Section~\ref{sec:log}.
In Section~\ref{sec:symmetric}, we show that the same tilting inequality holds if we replace the logarithm derivative by the symmetric sandwiched quotient.
Section~\ref{sec:infinite} extends both inequalities to a separable Hilbert space: Section~\ref{sec:trace-class} treats trace-class $A$ and bounded $B$, and Section~\ref{sec:unbounded} allows unbounded $B$.

\section{An averaged power estimate} \label{sec:power_estimate}

We first state two derivative facts. Their proofs are deferred to
Appendices~\ref{app:identity} and~\ref{app:derivative}.

\begin{lemm}[Averaged derivative identity]\label{lem:identity}
	Let $A,B>0$, $r\in\mathbb R$, and $f\in C^1(0,\infty)$. For $u\ge0$,
	\begin{align}
		&\frac{\mathrm d}{\mathrm du}\int_0^1
		\ip{A^{(1+r)/2}}{f(\Gamma_{v,B+uI\mid A})(A^{(1+r)/2})}\dd v
		\notag\\
		&\qquad=\Tr\!\left[A^{r/2}
		\D f\!\left[A^{-1/2}(B+uI)A^{-1/2}\right](A^{r/2})\right].
		\label{eq:derivative-identity}
	\end{align}
	At $u=0$, the derivative may be taken from the right.
\end{lemm}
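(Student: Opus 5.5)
The plan is to prove \eqref{eq:derivative-identity} first for the resolvent-type functions $f_t(x)=(t+x)^{-1}$, $t>0$, where both sides can be computed in closed form. We then pass to general $f\in C^1(0,\infty)$ by linearity and a density argument. As a consistency check, in the commuting scalar case both sides reduce to $f'\!\big((b+u)/a\big)a^{r}$.

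\emph{Step 1 (resolvent reduction).} Write $Y=B+uI$ and $L=\mathcal L_v(A)$. From \eqref{eq:gamma}, $t+\Gamma_{v,Y\mid A}=L^{-1/2}\mathcal L_v(tA+Y)L^{-1/2}$, so
\[
(t+\Gamma_{v,Y\mid A})^{-1}=L^{1/2}\mathcal L_v(Z)^{-1}L^{1/2},\qquad Z\coloneqq tA+B+uI>0 .
\]
Next, $L$ acts on functions of $A$ as left multiplication by $A$, since $\mathcal L_v(A)(A^{p})=A^{p+1}$. Hence $L^{1/2}(A^{(1+r)/2})=A^{1+r/2}\eqqcolon W$. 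Moreover $\mathcal L_v(Z)$ is self-adjoint for the Hilbert--Schmidt inner product. Combining these with \eqref{eq:log-average-intro}, the left-hand integral becomes
\[
\int_0^1\ip{W}{\mathcal L_v(Z)^{-1}(W)}\dd v=\Tr\!\left[W\,\D\log[Z](W)\right].
\]

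\emph{Step 2 (differentiate and compare).} The variable $u$ enters only through $Z$, with $\tfrac{\mathrm d}{\mathrm du}Z=I$. Let $g(Z)=\Tr[Z\log Z-Z]$. Then $\Tr[W\D\log[Z](W)]=\D^2g[Z](W,W)$, so the $u$-derivative equals $\D^3g[Z](W,W,I)$. By symmetry of the third derivative, this is $\D^2\big(\Tr\log\big)[Z](W,W)=-\Tr[Z^{-1}WZ^{-1}W]$, using $\D g[Z](I)=\Tr\log Z$.

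On the right-hand side, $X=A^{-1/2}YA^{-1/2}$ and $(t+X)^{-1}=A^{1/2}Z^{-1}A^{1/2}$. Also $\D f_t[X](H)=-(t+X)^{-1}H(t+X)^{-1}$. Substituting and cycling the trace gives $-\Tr[A^{1/2}A^{r/2}A^{1/2}Z^{-1}A^{1/2}A^{r/2}A^{1/2}Z^{-1}]=-\Tr[WZ^{-1}WZ^{-1}]$. This agrees with the left-hand side, and it holds for every $u\ge0$, one-sidedly at $u=0$. Constants $f\equiv c$ are trivial, since both sides vanish.

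\emph{Step 3 (density).} Both sides of \eqref{eq:derivative-identity} are linear in $f$. For $u$ in a compact interval $[0,U]$, the spectra of $X$ and of all $\Gamma_{v,B+uI\mid A}$ lie in a fixed compact $K\subset(0,\infty)$. The bounds come from $\lambda_{\min}(Y)/\lambda_{\max}(A)\le\Gamma\le\lambda_{\max}(Y)/\lambda_{\min}(A)$, which follow from $\lambda_{\min}(Y)\,\mathcal L_v(I)\le\mathcal L_v(Y)$ and $\mathcal L_v(A)\le\lambda_{\max}(A)\,\mathcal L_v(I)$, and similarly for the upper bound. Rather than differentiating limits, I would prove the integrated form
\[
F_f(u_1)-F_f(u_0)=\int_{u_0}^{u_1}R_f(u)\dd u ,
\]
where $F_f$ is the left-hand integral and $R_f$ the right-hand side. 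Both sides of this integrated form are continuous in $f$ for the $C^1(K)$ norm: Daleckii--Krein bounds $\D f$ by $\sup_K|f'|$, and $f\mapsto f(\Gamma)$ is sup-norm continuous. By Stone--Weierstrass, the functions $x\mapsto (t+x)^{-2}$ with $t>0$ are closed under products up to span, separate points, and vanish nowhere, so their span is uniformly dense in $C(K)$. Integrating, the span of $\{1,f_t\}$ is dense in $C^1(K)$. The integrated identity therefore holds for all $f\in C^1$. Since $R_f$ is continuous in $u$, differentiating gives \eqref{eq:derivative-identity}, including the right derivative at $u=0$.

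I expect the main obstacle to be Step 2: verifying carefully that the $v$-average together with the second/third-derivative symmetry yields exactly $-\Tr[WZ^{-1}WZ^{-1}]$. If the symmetry argument is awkward, the fallback is a direct check in an eigenbasis of $Z$ via divided differences of $\log$.
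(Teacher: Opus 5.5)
Your proof is correct and follows the same skeleton as the paper's proof in Appendix~\ref{app:identity}. The resolvent factorization $(\Gamma_{v,B+uI\mid A}+t)^{-1}=\mathcal L_v(A)^{1/2}\mathcal L_v(Z)^{-1}\mathcal L_v(A)^{1/2}$, together with $\mathcal L_v(A)^{1/2}(A^{(1+r)/2})=A^{1+r/2}$, is identical, and the extension to $C^1$ functions is again by continuity and density. The execution differs in three places.

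(i) For the resolvent $u$-derivative, the paper differentiates inside the $v$-integral to get $-\int_0^1\mathcal L_v(Z)^{-2}\dd v$. It then evaluates this with the scalar identity $\int_0^1((1-v)x_i+vx_j)^{-2}\dd v=(x_ix_j)^{-1}$. You instead first integrate in $v$ to obtain $\Tr[W\D\log[Z](W)]$, and then use symmetry of the third derivative of $\Tr[Z\log Z-Z]$. This is slicker but equivalent: both compute the $u$-derivative of the divided difference of $\log$.

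(ii) The paper shows that the \emph{pointwise} derivative is a $C^1(J)$-continuous functional of $f$. That requires differentiating $u\mapsto f(\Gamma_{v,B+uI\mid A})$ under the integral, via the chain rule and the Daleckii--Krein bound on the superoperator space. Your integrated identity needs only sup-norm continuity of $f\mapsto f(\Gamma)$ on the left, and recovers the derivative by the fundamental theorem of calculus.

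(iii) For density, the paper expands $(x+z)^{-1}$ in powers of $z^{-1}$ for $z>\max J$. Uniqueness of coefficients reduces the identity to monomials, and Weierstrass applied to $f'$ finishes. You apply Stone--Weierstrass to resolvent squares directly.

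Two loose ends should be tightened. First, ``closed under products up to span'' is not literally true. By partial fractions, $(t+x)^{-2}(s+x)^{-2}$ involves $(t+x)^{-1}$ and $(s+x)^{-1}$, and it equals $(t+x)^{-4}$ when $s=t$. These functions do lie in the uniform closure of the span on $K$: the first is $\int_t^\infty(\sigma+x)^{-2}\dd\sigma$, and the second is a second $t$-derivative of $(t+x)^{-2}$. Hence the \emph{closed} span is a subalgebra and Stone--Weierstrass applies. Alternatively, apply Stone--Weierstrass to polynomials without constant term in $(1+x)^{-1}$.

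Second, your final differentiation step needs continuity of $u\mapsto R_f(u)$, which you assert without proof. It follows from your own ingredients. For $p_n\in\operatorname{span}\{1,f_t\}$, $R_{p_n}$ is explicitly continuous. When $p_n\to f$ in $C^1(K)$, the Daleckii--Krein bound makes $R_{p_n}\to R_f$ uniformly on $[0,U]$, so $R_f$ is continuous.
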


\begin{lemm}[Araki-type estimate for a power derivative]\label{lem:derivative}
	For $A,B>0$ and $0<s<1$,
	\begin{equation}\label{eq:derivative}
		\Tr\!\left[A^{s/2}\D_s[B](A^{s/2})\right]
		\ge s\Tr\!\left[A(A^{1/2}BA^{1/2})^{s-1}\right].
	\end{equation}
\end{lemm}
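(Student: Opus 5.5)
The plan is to reduce \eqref{eq:derivative} to a pointwise inequality for the resolvent $(B+t)^{-1}$ and then integrate over $t$. Throughout, set $K\coloneqq A^{s/2}$, so the left-hand side of \eqref{eq:derivative} is the quadratic form $\Tr[K\,\D_s[B](K)]$. Step~4 is the one I cannot yet close.

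\emph{Step 1 (superoperator form of the derivative).} In an eigenbasis $\{e_i\}$ of $B$ with eigenvalues $\lambda_i$, the Daleckii--Krein formula gives
\[
\Tr[K\,\D_s[B](K)]=\sum_{i,j}|K_{ij}|^2\,\frac{\lambda_i^s-\lambda_j^s}{\lambda_i-\lambda_j}.
\]
The divided difference equals $s\int_0^1((1-v)\lambda_i+v\lambda_j)^{s-1}\dd v$. Since $\mathcal L_v(B)$ acts on the matrix unit $e_ie_j^*$ as multiplication by $(1-v)\lambda_i+v\lambda_j$, this reads
\[
\Tr[K\,\D_s[B](K)]=s\int_0^1\ip{A^{s/2}}{\mathcal L_v(B)^{s-1}(A^{s/2})}\dd v .
\]
This matches the averaged structure used for $\D\log$ in \eqref{eq:log-average-intro}, and it isolates the factor $s$ appearing on the right-hand side.

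\emph{Step 2 (reduction to resolvents).} For $p=s-1\in(-1,0)$, write
\[
x^{p}=\frac{\sin(\pi s)}{\pi}\int_0^\infty t^{p}(x+t)^{-1}\dd t .
\]
Apply this functional calculus both to the superoperator $\mathcal L_v(B)$ and to the matrix $A^{1/2}BA^{1/2}$. Both sides of \eqref{eq:derivative} then become integrals, with the same positive weight in $t$, of resolvent expressions. It therefore suffices to compare the two resolvent integrands for each fixed $t>0$. Alternatively, one can aim directly at the corresponding inequality for $x^p$ using operator convexity of $x^{p}$ on $(0,\infty)$.

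\emph{Step 3 (moving $A$ inside).} On the right, note
\[
\Tr[A(A^{1/2}BA^{1/2})^{p}]=\ip{A^{1/2}}{(A^{1/2}BA^{1/2})^{p}A^{1/2}}.
\]
By the polar decomposition of $B^{1/2}A^{1/2}$, this equals
\[
\ip{B^{1/2}A^{1/2}}{(B^{1/2}AB^{1/2})^{p}\,B^{1/2}A^{1/2}},
\]
a form built from left multiplications only. The aim is then to show that the average over $v$ of mixed left/right multiplications, evaluated on $A^{s/2}$, dominates this left-multiplication expression. I would try to derive this from joint convexity of $(X,Y)\mapsto \ip{K}{\mathcal L_v(\cdot)^{p}K}$-type quantities, or from a Hansen--Pedersen--Jensen inequality $\Phi(M)^{p}\le\Phi(M^{p})$ for a suitable unital positive map $\Phi$. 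The natural choice for $\Phi$ is the compression by $A^{1/2}$, normalised so that it becomes unital, together with Jensen in $v$. Araki--Lieb--Thirring, in the direction where a negative power makes the trace increase, is the fallback for comparing $A^s$ sandwiched by powers of $B$ with $A$ sandwiched by powers of $A^{1/2}BA^{1/2}$.

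\emph{Step 4 (main obstacle).} The hard point is the mismatch of exponents: $A^{s/2}$ appears on the left while $A^{1/2}$ appears on the right. A naive Jensen inequality for the convex function $x^{p}$ on a single vector state only produces a scalar bound that loses the operator structure. The exponent mismatch therefore has to be absorbed by a Lieb-concavity or Araki-type interpolation, presumably via complex interpolation in the exponent $s$ between the endpoint cases $s\to0$ and $s\to1$ where the statement degenerates. I would first test the inequality on commuting $A$ and $B$, where it reduces to $s\lambda^{s-1}a^{s}\ge s\,a\,(a\lambda)^{s-1}$, which is an equality. This suggests the argument must be a genuinely noncommutative interpolation, ideally through a log-majorization $A^{s/2}B^{s-1}A^{s/2}\succ_{\log}\cdots$ applied to each resolvent.
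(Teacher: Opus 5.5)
Your proposal does not prove the lemma. Step 4, which carries all of the content, is left open, and the concrete reductions you make before it cannot lead there.

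\emph{Step 2 fails already for commuting scalars.} Take $A=a$ and $B=b$. The two resolvent integrands are $a^s/(b+t)$ and $a/(ab+t)$. Against the common weight $\frac{\sin(\pi s)}{\pi}t^{s-1}$ they integrate to the same value $a^sb^{s-1}$, since the lemma is an equality in this case. They are not identical, however, so their difference must change sign. Concretely, for $a>1$ their ratio tends to $a^{s-1}<1$ as $t\to\infty$, so the per-$t$ inequality fails. In the scalar case the two sides match only after the rescaling $t\mapsto at$, which has no noncommutative analogue. Any resolvent-by-resolvent comparison, including a log-majorization ``applied to each resolvent'', is therefore ruled out.

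\emph{Step 3 contains a false identity.} One has $\ip{B^{1/2}A^{1/2}}{(B^{1/2}AB^{1/2})^{p}B^{1/2}A^{1/2}}=\Tr[(B^{1/2}AB^{1/2})^{s}]$, which for scalars is $(ab)^s\neq a^sb^{s-1}$. The correct rewriting is $\Tr[A(A^{1/2}BA^{1/2})^{s-1}]=\Tr[B^{-1}(B^{1/2}AB^{1/2})^{s}]=\Tr[A\#_{1-s}B^{-1}]$.

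\emph{The missing idea} is a global mechanism that turns the $A^{s/2}$-level left side into an $A$-level quantity. The paper fixes $K=B^{-s/2}(B^{s/2}A^{s/2}B^{s/2})^{s/2}$. This choice gives $K^*B^sK=(B^{s/2}A^{s/2}B^{s/2})^s$ and $K(K^*B^sK)^{1/s-1}K^*=A^{s/2}$. By the chain rule, the left side is then exactly $s$ times the derivative at $B$, in direction $A^{s/2}$, of $F(X)=\Tr[(K^*X^sK)^{1/s}]$. Epstein's theorem makes $F$ concave, and $F$ is homogeneous of degree one, so $F(Y)\le\D F[B](Y)$. This yields the lower bound $s\Tr[(K^*A^{s^2/2}K)^{1/s}]$, whose inner operator has the same eigenvalues as $A^{s^2/4}(B^{-s}\#_sA^{s/2})A^{s^2/4}$. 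The chain is closed by two log-majorizations followed by taking traces:
\begin{itemize}
\item Ando--Hiai gives $A\#_{1-s}B^{-1}\preceq_{\log}(A^s\#_{1-s}B^{-s})^{1/s}$.
\item The reversed generalized Bebiano--Lemos--Provid\^encia inequality gives $(A^s\#_{1-s}B^{-s})^{1/s}\preceq_{\log}(A^{s^2/4}(B^{-s}\#_sA^{s/2})A^{s^2/4})^{1/s}$.
\end{itemize}
The outer power $1/s$ in Epstein's functional and in Ando--Hiai is exactly what absorbs the exponent mismatch you identified. Your Step 1 representation is correct, but it is not needed for this lemma.
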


\begin{lemm}[Averaged power estimate]\label{lem:moment}
	For $A,B>0$, $\alpha \in [1/2,1]$, and $s = \frac{1-\alpha}{\alpha}$,
	\begin{equation}\label{eq:moment}
		\int_0^1\ip{\sqrt A}{\Gamma_{v,B^\alpha\mid A^\alpha}^{s}(\sqrt A)}\dd v
		\le\Tr[A^\alpha B^{1-\alpha}].
	\end{equation}
	Equality holds at $\alpha=1/2$ and $\alpha=1$.
\end{lemm}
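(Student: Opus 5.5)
The plan is to interpolate in a shift parameter $u\ge0$, compare two functions of $u$ through their derivatives using Lemma~\ref{lem:identity} and Lemma~\ref{lem:derivative}, and then send $u\to\infty$. The endpoint cases are handled directly. At $\alpha=1$ we have $s=0$, and both sides equal $\Tr A$. At $\alpha=1/2$ we have $s=1$. The integrand is then linear in $\Gamma$, and one computes $\int_0^1\ip{\sqrt A}{\mathcal L_v(A^{1/2})^{-1/2}\mathcal L_v(B^{1/2})\mathcal L_v(A^{1/2})^{-1/2}(\sqrt A)}\dd v$. This works because $\sqrt A$ commutes with $A^{1/2}$, so $\mathcal L_v(A^{1/2})^{-1/2}(\sqrt A)=I$. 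The integral therefore equals $\int_0^1\Tr[\mathcal L_v(B^{1/2})(I)]\dd v=\Tr[A^{1/2}B^{1/2}]$, which gives the stated equalities. So assume $0<s<1$.

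For $u\ge0$ define
\[
F(u)=\int_0^1\ip{\sqrt A}{\Gamma_{v,B^\alpha+uI\mid A^\alpha}^{s}(\sqrt A)}\dd v,\qquad G(u)=\Tr\!\left[A^\alpha(B^\alpha+uI)^{s}\right].
\]
Since $\alpha s=1-\alpha$, we have $G(0)=\Tr[A^\alpha B^{1-\alpha}]$, so the claim is $F(0)\le G(0)$. Apply Lemma~\ref{lem:identity} with $A\to A^\alpha$, $B\to B^\alpha$, $f(x)=x^s$ and $r=s$; indeed $\alpha(1+s)/2=1/2$, so $(A^\alpha)^{(1+r)/2}=\sqrt A$. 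This gives
\[
F'(u)=\Tr\!\left[A^{(1-\alpha)/2}\D_s\!\left[A^{-\alpha/2}(B^\alpha+uI)A^{-\alpha/2}\right](A^{(1-\alpha)/2})\right].
\]
Now apply Lemma~\ref{lem:derivative} with $A$ replaced by $A^\alpha$, so that $(A^\alpha)^{s/2}=A^{(1-\alpha)/2}$, and with $B$ replaced by $X_u=A^{-\alpha/2}(B^\alpha+uI)A^{-\alpha/2}$. Since $(A^{\alpha})^{1/2}X_u(A^\alpha)^{1/2}=B^\alpha+uI$, we obtain
\[
F'(u)\ge s\Tr[A^\alpha(B^\alpha+uI)^{s-1}]=G'(u).
\]
Hence $G-F$ is nonincreasing on $[0,\infty)$, and
\[
G(0)-F(0)\ge\lim_{u\to\infty}\bigl(G(u)-F(u)\bigr),
\]
provided this limit exists. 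It therefore suffices to show that $G(u)-F(u)\to0$.

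That asymptotic step is the main obstacle. Write $\Gamma_{v,B^\alpha+uI\mid A^\alpha}=\Gamma_{v,B^\alpha\mid A^\alpha}+u\,\mathcal L_v(A^\alpha)^{-1}$. Because $\sqrt A$ commutes with $A^\alpha$, we have $\mathcal L_v(A^\alpha)^{-s}(\sqrt A)=A^{-(1-\alpha)}\sqrt A$. The leading term of $F$ is then $u^s\ip{\sqrt A}{A^{-(1-\alpha)}\sqrt A}=u^s\Tr A^\alpha$, which matches the leading term of $G$. I would bound both remainders by $O(u^{s-1})$, using the integral representation $x^s=c_s\int_0^\infty\bigl(1-t(x+t)^{-1}\bigr)t^{s-1}\dd t$ at the superoperator level, or equivalently Fréchet-derivative and mean-value estimates for $x\mapsto x^s$ around the invertible positive operator $u\,\mathcal L_v(A^\alpha)^{-1}$. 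This yields $\|(uY+X)^s-(uY)^s\|\le C\,u^{s-1}\|X\|$ with $C$ depending on $\lambda_{\min}(Y)$, uniformly in $v\in[0,1]$, since $\mathcal L_v(A^\alpha)$ is bounded above and below uniformly in $v$. As $s<1$, both remainders vanish as $u\to\infty$, which gives $F(0)\le G(0)$.
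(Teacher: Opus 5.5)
Your treatment of $1/2<\alpha<1$ follows the paper's proof. It uses the same function $F-G$ (the paper's $g$), the same applications of Lemma~\ref{lem:identity} with $r=s$ and of Lemma~\ref{lem:derivative} with $A^\alpha$ and $X_u$, and the same conclusion from monotonicity plus a vanishing limit.

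The only divergence is how you show $F(u)-G(u)\to0$. The paper uses no perturbation estimate. The order relations $u\,\mathcal L_v(A^\alpha)^{-1}\le\Gamma_{v,B^\alpha+uI\mid A^\alpha}\le(u+\norm{B^\alpha})\mathcal L_v(A^\alpha)^{-1}$ and the L\"owner--Heinz inequality squeeze both $F(u)$ and $G(u)$ between $u^s\Tr A^\alpha$ and $(u+\norm{B^\alpha})^s\Tr A^\alpha$. That gives $|F(u)-G(u)|\le\bigl((u+\norm{B^\alpha})^s-u^s\bigr)\Tr A^\alpha\to0$ in one line.

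Your linearization also works. The resolvent representation gives $\norm{\D_s[Z](H)}\le s\,\lambda_{\min}(Z)^{s-1}\norm{H}$. Integrate along the segment starting at $u\,\mathcal L_v(A^\alpha)^{-1}$, whose smallest eigenvalue is at least $u/\norm{A^\alpha}$ since $\Gamma_{v,B^\alpha\mid A^\alpha}\ge0$. Together with $\norm{\Gamma_{v,B^\alpha\mid A^\alpha}}\le\norm{B^\alpha}/\lambda_{\min}(A^\alpha)$, this yields the same $O(u^{s-1})$ rate uniformly in $v$. It is correct, just heavier than the monotone sandwich, which uses only the order structure already present.

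There is a slip in your endpoint computation at $\alpha=1/2$. On operators commuting with $A$, $\mathcal L_v(A^{1/2})$ acts as multiplication by $A^{1/2}$. Hence $\mathcal L_v(A^{1/2})^{-1/2}(\sqrt A)=A^{-1/4}A^{1/2}=A^{1/4}$, not $I$. With $I$ your integrand would be $\Tr[\mathcal L_v(B^{1/2})(I)]=\Tr B^{1/2}$, which is not $\Tr[A^{1/2}B^{1/2}]$, so the conclusion you wrote does not follow from your intermediate step. With the correct vector, $\ip{A^{1/4}}{\mathcal L_v(B^{1/2})(A^{1/4})}=(1-v)\Tr[A^{1/4}B^{1/2}A^{1/4}]+v\Tr[A^{1/2}B^{1/2}]=\Tr[A^{1/2}B^{1/2}]$ for every $v$. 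This is the paper's computation and gives the stated equality.
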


\begin{proof}
	Suppose first that $1/2<\alpha<1$, so $0<s<1$. For $u\ge0$, set
	\[
	g(u)=\int_0^1
	\ip{\sqrt A}{\Gamma_{v,B^\alpha+uI\mid A^\alpha}^{s}(\sqrt A)}\dd v
	-\Tr\!\left[A^\alpha(B^\alpha+uI)^s\right].
	\]
	Since $\alpha(1+s)=1$, Lemma~\ref{lem:identity}, with
	$A^\alpha,B^\alpha$ in place of $A,B$, gives
	\begin{align*}
		g'(u)
		&=\Tr\!\left[A^{(1-\alpha)/2}
		\D_s\!\left[A^{-\alpha/2}(B^\alpha+uI)A^{-\alpha/2}\right]
		(A^{(1-\alpha)/2})\right]\\
		&\qquad-s\Tr\!\left[A^\alpha(B^\alpha+uI)^{s-1}\right]
		\ge0.
	\end{align*}
	The last inequality is Lemma~\ref{lem:derivative}, applied to
	$A^\alpha$ and $A^{-\alpha/2}(B^\alpha+uI)A^{-\alpha/2}$.
	
	To determine the sign of $g$, note that
	\[
	u\mathcal L_v(A^\alpha)^{-1}
	\le\Gamma_{v,B^\alpha+uI\mid A^\alpha}
	\le(u+\norm{B^\alpha})\mathcal L_v(A^\alpha)^{-1}.
	\]
	By the L\"owner--Heinz inequality \cite{Bhatia} and the identity
	\[
	\ip{\sqrt A}{\mathcal L_v(A^\alpha)^{-s}(\sqrt A)}=\Tr A^\alpha,
	\]
	both terms defining $g(u)$ lie between $u^s\Tr A^\alpha$ and
	$(u+\norm{B^\alpha})^s\Tr A^\alpha$. Consequently,
	\[
	|g(u)|\le\bigl((u+\norm{B^\alpha})^s-u^s\bigr)\Tr A^\alpha\longrightarrow0
	\qquad (u\to\infty).
	\]
	Thus $g$ is increasing with limit zero, and $g(0)\le0$ proves
	\eqref{eq:moment}.
	
	At $\alpha=1$, both sides are $\Tr A$. At $\alpha=1/2$, the integrand is
	\[
	\ip{A^{1/4}}{\mathcal L_v(B^{1/2})(A^{1/4})}
	=\Tr[A^{1/2}B^{1/2}],
	\]
	which proves the other endpoint.
\end{proof}

\section{The sharp logarithmic trace inequality} \label{sec:log}

For $X>0$, the logarithmic derivative has the representations
\begin{equation}\label{eq:log-average}
	\D\log[X](H)=\int_0^\infty(X+uI)^{-1}H(X+uI)^{-1}\dd u
	=\int_0^1\mathcal L_v(X)^{-1}(H)\dd v;
\end{equation}
see \cite{Bathory,Bhatia,Higham}. In an eigenbasis of $X$ with eigenvalues $(x_i)_i$, both maps multiply
the $(i,j)$ entry of $H$ by
\[
\begin{cases}
	\dfrac{\log x_i-\log x_j}{x_i-x_j},&x_i\ne x_j,\\[4pt]
	1/x_i,&x_i=x_j.
\end{cases}
\]
In particular, $\D\log[X]$ is self-adjoint for the Hilbert--Schmidt
inner product, preserves positivity, and satisfies $\D\log[X](X)=I$.

We also need the elementary inequality
\begin{equation}\label{eq:log-cs}
	\Tr\!\left[A^p\D\log[B](A^q)\right]
	\ge\Tr\!\left[A^{(p+q)/2}\D\log[B](A^{(p+q)/2})\right],
	\qquad p,q\in\mathbb R,
\end{equation}
for $A,B>0$. Indeed, write $A=\sum_i a_iP_i$. The coefficients
$k_{ij}=\Tr[P_i\D\log[B](P_j)]$ are nonnegative and symmetric in $i,j$.
Therefore \eqref{eq:log-cs} follows by summing
\[
\frac12\bigl(a_i^p a_j^q+a_i^q a_j^p\bigr)
\ge(a_i a_j)^{(p+q)/2}
\]
against $k_{ij}$.

\begin{theo}[Sharp tilting inequality]\label{thm:log}
	Let $A,B\ge0$ be matrices and let $1/2\le\alpha\le1$, $s=\frac{1-\alpha}{\alpha}$. Then
	\begin{equation}\label{eq:log-main}
		\Tr\!\left[A\,\D\log[A^\alpha+B^\alpha](B^\alpha)\right]
		\le s^s(1-s)^{1-s}\Tr[A^\alpha B^{1-\alpha}].
	\end{equation}
	The logarithmic derivative is evaluated on $\supp(A+B)$ and extended
	by zero on its orthogonal complement; at $\alpha=1$, $B^0$ is the
	support projection of $B$. For $A,B>0$, the inequality has the refinement
	\begin{align}
		\Tr\!\left[A\,\D\log[A^\alpha+B^\alpha](B^\alpha)\right]\notag
		&\le s^s(1-s)^{1-s}
		\int_0^1\ip{\sqrt A}{\Gamma_{v,B^\alpha\mid A^\alpha}^{s}(\sqrt A)}\dd v
		\notag\\
		&\le s^s(1-s)^{1-s}\Tr[A^\alpha B^{1-\alpha}].
		\label{eq:log-chain}
	\end{align}
	The coefficient is optimal for every $\alpha$. If $A,B>0$ and
	$1/2<\alpha<1$, equality in \eqref{eq:log-main} holds if and only if
	\begin{equation}\label{eq:equality}
		B^\alpha=\frac{1-s}{s}A^\alpha.
	\end{equation}
\end{theo}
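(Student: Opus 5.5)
We prove the refinement \eqref{eq:log-chain} for $A,B>0$ first, then deduce \eqref{eq:log-main} for $A,B\ge0$ by perturbation, and finally treat optimality and equality. The second inequality in \eqref{eq:log-chain} is exactly Lemma~\ref{lem:moment}, so the main work is the first one.

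\emph{Step 1 (superoperator form of the left side).} Write $X=A^\alpha$, $Y=B^\alpha$. Since $\D\log[X+Y](X+Y)=I$,
\[
\Tr[A\,\D\log[X+Y](Y)]=\Tr A-\Tr[A\,\D\log[X+Y](X)].
\]
Apply \eqref{eq:log-cs} with $p=1$, $q=\alpha$; the sign is favourable:
\[
\Tr[A\,\D\log[X+Y](A^\alpha)]\ge\Tr[A^{(1+\alpha)/2}\D\log[X+Y](A^{(1+\alpha)/2})].
\]
Hence the left side of \eqref{eq:log-main} is at most the intermediate quantity displayed in the introduction. We then verify that identity:
\[
\Tr A-\Tr[A^{(1+\alpha)/2}\D\log[X+Y](A^{(1+\alpha)/2})]=\int_0^1\ip{\sqrt A}{\Gamma_v(I+\Gamma_v)^{-1}(\sqrt A)}\dd v,\qquad \Gamma_v=\Gamma_{v,Y\mid X}.
\]
To see this, use \eqref{eq:log-average} together with
\[
\mathcal L_v(X+Y)=\mathcal L_v(X)^{1/2}(I+\Gamma_v)\mathcal L_v(X)^{1/2}
\]
and $\mathcal L_v(X)^{1/2}$-type manipulations. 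A useful identity is that $\mathcal L_v(X)^{-1/2}(A^{(1+\alpha)/2})=\sqrt A$, because $\mathcal L_v(X)$ acts on functions of $A$ as multiplication by $A^\alpha$. We also use $\ip{\sqrt A}{\sqrt A}=\Tr A$ and $I-(I+\Gamma)^{-1}=\Gamma(I+\Gamma)^{-1}$.

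\emph{Step 2 (lifting the scalar inequality).} The operator $\Gamma_v$ is positive and self-adjoint for the Hilbert--Schmidt inner product. By functional calculus, the scalar bound \eqref{eq:scalar-intro} gives
\[
\Gamma_v(I+\Gamma_v)^{-1}\le s^s(1-s)^{1-s}\Gamma_v^s.
\]
Integrating in $v$ yields the first inequality in \eqref{eq:log-chain}, and Lemma~\ref{lem:moment} yields the second.

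\emph{Step 3 (the $\alpha=1$ endpoint).} Here $s=0$ and $s^s(1-s)^{1-s}=1$. The bound $x/(1+x)\le1$ gives $\Gamma(I+\Gamma)^{-1}\le I$, which is consistent with $\Gamma^0=I$.

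\emph{Step 4 (semidefinite $A,B$).} Replace $A,B$ by $A+\epsilon I$ and $B+\epsilon I$ and let $\epsilon\to0$. The right side converges, with $B^{1-\alpha}\to B^0$ interpreted as the support projection when $\alpha=1$. The delicate point is the left side, since $\D\log$ blows up off the support. We plan to restrict to $\supp(A+B)$ and check continuity there. Where only one of $A,B$ is nonzero, the contributions should be handled by an argument like the $\epsilon$-limit of $\frac{a}{a+b}$-type divided differences, which remain bounded by $1$. This limiting step is likely the main technical obstacle, and we expect it needs care in the off-diagonal block between $\ker A$ and $\ker B$ directions within the support.

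\emph{Step 5 (optimality and equality).} Take commuting scalars with $(1-s)a^\alpha=s b^\alpha$, for instance one-dimensional matrices. There the scalar inequality is tight, which gives optimality. For $\alpha=1/2$ this is $a=b$; for $\alpha=1$ we let $b/a\to\infty$.

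For the equality characterization with $1/2<\alpha<1$, sufficiency follows by direct computation, since for commuting $A$ and $B$ all steps are tight. For necessity, equality in Step 2 forces $\Gamma_v$ restricted to the relevant cyclic subspace of $\sqrt A$ to have spectrum $\{s/(1-s)\}$, because the scalar inequality is strict elsewhere. Thus $\Gamma_v(\sqrt A)=\frac{s}{1-s}\sqrt A$, that is, $\mathcal L_v(Y)(A^{(1-\alpha)/2}\cdot)$ matches $\frac{s}{1-s}\mathcal L_v(X)$ applied to the same argument. Evaluating at $v=0$ and $v=1$ gives $YA^{(1-\alpha)/2}=\frac{s}{1-s}A^{(1+\alpha)/2}$. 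Since $A>0$, this yields $Y=\frac{s}{1-s}X$, which is \eqref{eq:equality}, using $\frac{1-s}{s}$ in the inverse form as stated.
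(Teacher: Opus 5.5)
Your Steps 1--3 are the paper's argument for $A,B>0$: the bound via $\D\log[X](X)=I$ and \eqref{eq:log-cs}, the $\Gamma_{v,B^\alpha\mid A^\alpha}(I+\Gamma_{v,B^\alpha\mid A^\alpha})^{-1}$ identity, the lifted scalar inequality, and Lemma~\ref{lem:moment}. Step 4 also works for $\alpha<1$.

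The genuine gap is Step 4 at $\alpha=1$ when $B$ is singular. For the regularized pair, $(B+\varepsilon I)^0=I$, so the right side is $\Tr[A+\varepsilon I]$. This tends to $\Tr A$, not to $\Tr[AB^0]$. The map $B\mapsto B^0$ is discontinuous, and $\Tr A-\Tr[AB^0]=\norm{(I-B^0)A^{1/2}}_2^2>0$ whenever $\supp A\not\subseteq\supp B$. So your limit proves only the weaker bound $\Tr A$ at $\alpha=1$. The missing step is the paper's: first establish \eqref{eq:log-main} for singular $A,B$ and every $\alpha<1$, then let $\alpha\uparrow1$. On $\supp(A+B)$ the left side converges, while $s^s(1-s)^{1-s}\to1$ and $B^{1-\alpha}\to B^0$.

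Conversely, the difficulty you flag as the main obstacle is not real. Since $\ker(A^\alpha+B^\alpha)=\ker A\cap\ker B$, the matrix $A^\alpha+B^\alpha$ is positive definite on $\supp(A+B)$. Both $A$ and $B$ vanish on the complement, so the regularized problem is a direct sum whose kernel block contributes only $O(\varepsilon)$. Continuity of $X\mapsto\D\log[X]$ at positive definite $X$ then gives the limit, with no off-diagonal or divided-difference analysis.

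Step 5 has two further errors.
\begin{itemize}
\item \textbf{Optimality at $\alpha=1/2$.} The choice $a=b$ gives the ratio $a^{1/2}/(a^{1/2}+b^{1/2})=1/2$, so it shows nothing. For $s=1$ the condition $(1-s)a^\alpha=sb^\alpha$ degenerates to $b=0$, and you must let $b/a\downarrow0$. With $A=1$ the ratio is $b^{2\alpha-1}/(1+b^\alpha)$.
\item \textbf{Equality case.} The function $x^{1-s}/(1+x)$ is maximized at $x=(1-s)/s$, not at $s/(1-s)$. Your value yields $B^\alpha=\frac{s}{1-s}A^\alpha$, which contradicts \eqref{eq:equality} unless $s=1/2$; appealing to ``the inverse form'' does not reconcile them. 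With the correct maximizer your computation gives $\Gamma_{0,B^\alpha\mid A^\alpha}(\sqrt A)=\frac{1-s}{s}\sqrt A$, hence \eqref{eq:equality}.
\end{itemize}
Also, a vanishing $v$-integral of a nonnegative integrand only forces vanishing for almost every $v$. To evaluate at $v=0$ you need continuity of the integrand in $v$, as the paper notes; evaluating at $v=1$ is unnecessary.
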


\begin{proof}
	First assume $A,B>0$. The identity
	$\D\log[A^\alpha+B^\alpha](A^\alpha+B^\alpha)=I$ and
	\eqref{eq:log-cs} give
	\begin{align}
		\Tr\!\left[A\,\D\log[A^\alpha+B^\alpha](B^\alpha)\right]
		&=\Tr A-
		\Tr\!\left[A\,\D\log[A^\alpha+B^\alpha](A^\alpha)\right]\notag\\
		&\le\Tr A-
		\Tr\!\left[A^{(1+\alpha)/2}\D\log[A^\alpha+B^\alpha]
		(A^{(1+\alpha)/2})\right]\notag\\
		&=\int_0^1\ip{\sqrt A}{
			\Gamma_{v,B^\alpha\mid A^\alpha}
			(I+\Gamma_{v,B^\alpha\mid A^\alpha})^{-1}(\sqrt A)}\dd v.
		\label{eq:common-bound}
	\end{align}
	For the last equality, use \eqref{eq:log-average},
	$\mathcal L_v(A^\alpha)^{1/2}(\sqrt A)=A^{(1+\alpha)/2}$, and
	\[
	(I+\Gamma_{v,B^\alpha\mid A^\alpha})^{-1}
	=\mathcal L_v(A^\alpha)^{1/2}
	\mathcal L_v(A^\alpha+B^\alpha)^{-1}\mathcal L_v(A^\alpha)^{1/2}.
	\]
	If $0<s<1$, the maximum of $x^{1-s}/(1+x)$ occurs at
	$x=(1-s)/s$ and equals $s^s(1-s)^{1-s}$. The endpoint bounds are
	immediate. Applying \eqref{eq:scalar-intro} to each $\Gamma$ in
	\eqref{eq:common-bound}, and then using Lemma~\ref{lem:moment}, proves
	\eqref{eq:log-chain}.
	
	For singular $A$ or $B$ and $\alpha<1$, restrict to
	$\supp(A+B)$, where $A^\alpha+B^\alpha>0$. Apply the result to
	$A+\varepsilon I$ and $B+\varepsilon I$ on that space, and let
	$\varepsilon\downarrow0$. Continuity of powers and of the logarithmic
	derivative at a positive definite matrix proves \eqref{eq:log-main}.
	At $\alpha=1$, take $\alpha\uparrow1$ in this finite-dimensional
	inequality. The left side converges on $\supp(A+B)$, whereas
	$B^{1-\alpha}\to B^0$. The common kernel contributes zero.
	
	For $0<s<1$, condition \eqref{eq:equality} gives equality by simultaneous
	diagonalization. Conversely, suppose $A,B>0$ and equality holds.
	The difference between the right and left sides of
	\eqref{eq:scalar-intro} is nonnegative and vanishes only at
	$x=(1-s)/s$. Equality in the scalar step of the proof forces the integral
	of the corresponding nonnegative quadratic form to vanish. This
	quadratic form is continuous in $v$, so it vanishes at $v=0$.
	Spectral calculus therefore gives
	\[
	\left(\Gamma_{0,B^\alpha\mid A^\alpha}-\frac{1-s}{s}I\right)(\sqrt A)=0.
	\]
	Since $\Gamma_{0,B^\alpha\mid A^\alpha}$ is left multiplication by
	$A^{-\alpha/2}B^\alpha A^{-\alpha/2}$ and $\sqrt A$ is invertible,
	\eqref{eq:equality} follows.
	
	Finally, scalar matrices satisfying \eqref{eq:equality} establish
	optimality in the interior. With $A=1$ and $B=b>0$, the ratio of the
	left side of \eqref{eq:log-main} to $\Tr[A^\alpha B^{1-\alpha}]$ is
%	\[
	$\frac{b^{2\alpha-1}}{1+b^\alpha}$.
%	\]
	This tends to $1$ as $b\downarrow0$ at $\alpha=1/2$, and as
	$b\to\infty$ at $\alpha=1$.
\end{proof}

\section{The symmetric quotient}\label{sec:symmetric}

The symmetric quotient admits the same optimal estimate, using the
common upper bound \eqref{eq:common-bound}.

\begin{prop}\label{cor:symmetric}
	For matrices $A,B\ge0$ and $1/2\le\alpha\le1$,
	\begin{equation}\label{eq:symmetric}
		\Tr\!\left[A(A^\alpha+B^\alpha)^{-1/2}
		B^\alpha(A^\alpha+B^\alpha)^{-1/2}\right]
		\le s^s(1-s)^{1-s}\Tr[A^\alpha B^{1-\alpha}].
	\end{equation}
	The inverse is taken on $\supp(A+B)$ and set to zero on the common
	kernel. The coefficient is optimal. For positive definite $A,B$ and
	$1/2<\alpha<1$, equality holds exactly under \eqref{eq:equality}.
\end{prop}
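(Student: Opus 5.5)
The plan is to show that the symmetric quotient is bounded by the same intermediate quantity \eqref{eq:common-bound} that was used for the logarithmic derivative. After that, the rest of the proof of Theorem~\ref{thm:log} (scalar step, Lemma~\ref{lem:moment}, regularization, equality and optimality) carries over almost verbatim.

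First assume $A,B>0$ and set $X=A^\alpha+B^\alpha$. Writing $B^\alpha=X-A^\alpha$, the left side of \eqref{eq:symmetric} equals $\Tr A-\Tr[A X^{-1/2}A^\alpha X^{-1/2}]$. Cyclicity gives
\[
\Tr[A X^{-1/2}A^\alpha X^{-1/2}]=\Tr[A^{\alpha/2}X^{-1/2}AX^{-1/2}A^{\alpha/2}].
\]
The key step is the lower bound
\[
\Tr[A X^{-1/2}A^\alpha X^{-1/2}]\ \ge\ \Tr\!\left[A^{(1+\alpha)/2}\D\log[X](A^{(1+\alpha)/2})\right].
\]
To get it, I would first apply the Cauchy--Schwarz type argument behind \eqref{eq:log-cs}, now with the positive symmetric kernel $k_{ij}=\Tr[P_iX^{-1/2}P_jX^{-1/2}]$ in the eigenprojections $P_i$ of $A$. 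This yields
\[
\Tr[AX^{-1/2}A^\alpha X^{-1/2}]\ge \Tr[A^{(1+\alpha)/2}X^{-1/2}A^{(1+\alpha)/2}X^{-1/2}].
\]
Second, in an eigenbasis of $X$ with eigenvalues $x_i$, the right side is $\sum_{ij}|h_{ij}|^2 (x_ix_j)^{-1/2}$, where $h=A^{(1+\alpha)/2}$. The log-derivative expression is $\sum_{ij}|h_{ij}|^2\frac{\log x_i-\log x_j}{x_i-x_j}$. The inequality between the geometric and logarithmic means, $\sqrt{x_iy_j}\le \frac{x_i-x_j}{\log x_i-\log x_j}$, gives $(x_ix_j)^{-1/2}\ge$ the log-divided difference. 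This is the main comparison, and it goes in the right direction. Combining the two steps, the left side of \eqref{eq:symmetric} is at most the right side of \eqref{eq:common-bound}. The proof of Theorem~\ref{thm:log} (scalar inequality \eqref{eq:scalar-intro} applied through spectral calculus to $\Gamma_{v,B^\alpha\mid A^\alpha}$, then Lemma~\ref{lem:moment}) then yields \eqref{eq:symmetric}.

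For singular $A,B$, I would restrict to $\supp(A+B)$, replace $A,B$ by $A+\varepsilon I,B+\varepsilon I$ there, and let $\varepsilon\downarrow0$. This uses continuity of $X^{-1/2}$ at a positive definite $X$ on that subspace. The endpoint $\alpha=1$ is handled by letting $\alpha\uparrow1$, exactly as in Theorem~\ref{thm:log}.

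For equality, condition \eqref{eq:equality} makes everything commute, and the scalar computation gives equality. Conversely, equality forces equality in the scalar step within \eqref{eq:common-bound}, and the argument of Theorem~\ref{thm:log} at $v=0$ gives \eqref{eq:equality}. Optimality follows from the same one-dimensional examples, where the symmetric quotient coincides with the logarithmic one. I expect the main obstacle to be keeping the two comparison steps in the correct direction, in particular checking that the symmetric kernel is positive so that the Cauchy--Schwarz argument applies.
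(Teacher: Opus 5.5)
Your proposal is correct and follows the paper's proof essentially step for step. You use the pairing inequality with $H=(A^\alpha+B^\alpha)^{-1/2}$ (your kernel $k_{ij}$ is indeed nonnegative and symmetric), then the logarithmic--geometric mean comparison to reach the common bound \eqref{eq:common-bound}, followed by the scalar step, Lemma~\ref{lem:moment}, and the same regularization, equality and optimality arguments (minor typo: $\sqrt{x_iy_j}$ should read $\sqrt{x_ix_j}$).
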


\begin{proof}
	Assume first $A,B>0$. For every Hermitian $H$, diagonalizing $A$ and
	pairing its $(i,j)$ and $(j,i)$ contributions gives
	\begin{equation}\label{eq:pairing}
		\Tr[AHA^\alpha H]
		\ge\Tr[A^{(1+\alpha)/2}HA^{(1+\alpha)/2}H].
	\end{equation}
	Also, the scalar logarithmic mean dominates the geometric mean
	\cite{Bhatia}, \cite[Proposition 2.1]{ChengLiu}, or equivalently
	\[
	\frac{\log x-\log y}{x-y}\le\frac1{\sqrt{xy}},\qquad x,y>0,
	\]
	with the quotient interpreted continuously at $x=y$. Consequently,
	diagonalization of $B>0$ gives
	\begin{equation}\label{eq:log-geom}
		\Tr[H\D\log[B](H)]\le\Tr[HB^{-1/2}HB^{-1/2}],
		\qquad H=H^*.
	\end{equation}
	Apply \eqref{eq:pairing} with $H=(A^\alpha+B^\alpha)^{-1/2}$ and
	\eqref{eq:log-geom} with $H=A^{(1+\alpha)/2}$ and
	$A^\alpha+B^\alpha$ in place of $B$. This yields
	\begin{align*}
		&\Tr\!\left[A(A^\alpha+B^\alpha)^{-1/2}
		A^\alpha(A^\alpha+B^\alpha)^{-1/2}\right]\\
		&\quad\ge\Tr\!\left[A^{(1+\alpha)/2}(A^\alpha+B^\alpha)^{-1/2}
		A^{(1+\alpha)/2}(A^\alpha+B^\alpha)^{-1/2}\right]\\
		&\quad\ge\Tr\!\left[A^{(1+\alpha)/2}\D\log[A^\alpha+B^\alpha]
		(A^{(1+\alpha)/2})\right].
	\end{align*}
	Subtracting from $\Tr A$ and using \eqref{eq:common-bound} gives
	\begin{align*}
		&\Tr\!\left[A(A^\alpha+B^\alpha)^{-1/2}
		B^\alpha(A^\alpha+B^\alpha)^{-1/2}\right]\\
		&\quad\le\int_0^1\ip{\sqrt A}{
			\Gamma_{v,B^\alpha\mid A^\alpha}
			(I+\Gamma_{v,B^\alpha\mid A^\alpha})^{-1}(\sqrt A)}\dd v\\
		&\quad\le s^s(1-s)^{1-s}
		\int_0^1\ip{\sqrt A}{\Gamma_{v,B^\alpha\mid A^\alpha}^{s}(\sqrt A)}\dd v\\
		&\quad\le s^s(1-s)^{1-s}\Tr[A^\alpha B^{1-\alpha}],
	\end{align*}
	by \eqref{eq:scalar-intro} and Lemma~\ref{lem:moment}. The support
	restriction, regularization, and endpoint limit used in
	Theorem~\ref{thm:log} prove the singular case. The same scalar examples
	prove sharpness. In the interior, equality for positive definite
	matrices forces equality in the same scalar step, so the argument in
	Theorem~\ref{thm:log} proves \eqref{eq:equality}; its sufficiency is immediate.
\end{proof}

\section{Extension to a separable Hilbert space}\label{sec:infinite}

Let $\mathcal H$ be a separable complex Hilbert space. We use the
spectral calculus and trace-ideal facts in \cite{ReedSimon,Simon};
$\norm{\,\cdot\,}_2$ denotes the Hilbert--Schmidt norm, with value
$+\infty$ outside that class.

\subsection{Trace-class operators}\label{sec:trace-class}

Let $A$ be a positive trace-class operator and $B$ a bounded positive
operator. For $1/2\le\alpha\le1$, define the logarithmic term by the
strong improper integral
\begin{equation}\label{eq:extended-log}
\D\log[A^\alpha+B^\alpha](B^\alpha)
:=\int_0^\infty(A^\alpha+B^\alpha+tI)^{-1}
B^\alpha(A^\alpha+B^\alpha+tI)^{-1}\dd t.
\end{equation}
The integrals over compact subintervals increase as the intervals expand.
Replacing $B^\alpha$ in the integrand by $A^\alpha+B^\alpha$ gives,
by the spectral theorem, the support projection $(A^\alpha+B^\alpha)^0$.
Since $0\le B^\alpha\le A^\alpha+B^\alpha$, the limit exists and
\begin{equation}\label{eq:contraction}
0\le\D\log[A^\alpha+B^\alpha](B^\alpha)
\le(A^\alpha+B^\alpha)^0\le I.
\end{equation}
This agrees with the usual derivative when the denominator is boundedly
invertible; otherwise \eqref{eq:extended-log} defines the notation and
does not assert Fr\'echet differentiability at a singular operator.

For the symmetric quotient, the inequality
$\norm{B^{\alpha/2}h}\le\norm{(A^\alpha+B^\alpha)^{1/2}h}$
defines $B^{\alpha/2}(A^\alpha+B^\alpha)^{-1/2}$ on the range of
$(A^\alpha+B^\alpha)^{1/2}$ by
\begin{equation}\label{eq:symmetric-factor}
\bigl[B^{\alpha/2}(A^\alpha+B^\alpha)^{-1/2}\bigr]
(A^\alpha+B^\alpha)^{1/2}h=B^{\alpha/2}h.
\end{equation}
The product extends continuously to the closure of that range and is
set to zero on its orthogonal complement; its operator norm is at most one.
The symmetric quotient means the adjoint of this extended product
times the product itself. In particular,
\begin{equation}\label{eq:extended-symmetric}
\Tr\!\left[A(A^\alpha+B^\alpha)^{-1/2}B^\alpha
(A^\alpha+B^\alpha)^{-1/2}\right]
=\norm{B^{\alpha/2}(A^\alpha+B^\alpha)^{-1/2}A^{1/2}}_2^2.
\end{equation}
This agrees with Proposition~\ref{cor:symmetric}. Both quotients are
positive contractions, so their pairings with $A$ are at most $\Tr A$.

The mixed trace is the extended nonnegative trace
\begin{equation}\label{eq:mixed-trace}
\Tr[A^\alpha B^{1-\alpha}]
:=\Tr[A^{\alpha/2}B^{1-\alpha}A^{\alpha/2}]
=\norm{B^{(1-\alpha)/2}A^{\alpha/2}}_2^2\in[0,\infty],
\end{equation}
where $B^0$ is the support projection at $\alpha=1$.
Positivity and monotone convergence also give
\begin{equation}\label{eq:trace-integral}
\Tr\!\left[A\,\D\log[A^\alpha+B^\alpha](B^\alpha)\right]
=\int_0^\infty
\norm{B^{\alpha/2}(A^\alpha+B^\alpha+tI)^{-1}A^{1/2}}_2^2\dd t.
\end{equation}

\begin{theo}[Trace-class extensions]\label{thm:infinite}
Let $A$ be positive trace class and $B$ bounded and positive on
$\mathcal H$. Both \eqref{eq:log-main} and \eqref{eq:symmetric} hold
for $1/2\le\alpha\le1$, with the interpretations
\eqref{eq:extended-log}, \eqref{eq:extended-symmetric}, and
\eqref{eq:mixed-trace}. At $\alpha=1$, their common right side is
$\Tr[AB^0]$. Both constants are optimal on every nonzero
separable Hilbert space.
\end{theo}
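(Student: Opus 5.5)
We reduce Theorem~\ref{thm:infinite} to the finite-dimensional results (Theorem~\ref{thm:log} and Proposition~\ref{cor:symmetric}) by compressing to finite-rank spectral projections of $A$ and passing to the limit. Fix an increasing sequence of finite-rank orthogonal projections $P_n\uparrow I$ strongly. We choose $P_n$ as the projection onto $\mathcal K_n=\operatorname{span}\{e_1,\dots,e_n\}+\operatorname{span}\{\text{first } n \text{ eigenvectors of } A\}$, or more simply onto an increasing exhausting family containing the eigenvectors of $A$. Put $A_n=P_nAP_n$ and $B_n=P_nBP_n$, regarded as positive matrices on $\mathcal K_n$. By Theorem~\ref{thm:log} and Proposition~\ref{cor:symmetric}, both left sides for $(A_n,B_n)$ are bounded by $s^s(1-s)^{1-s}\Tr[A_n^\alpha B_n^{1-\alpha}]$. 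We then need two limits: the left sides satisfy a lower semicontinuity
\[
\text{LHS}(A,B)\le\liminf_n \text{LHS}(A_n,B_n),
\]
and the right sides satisfy $\limsup_n\Tr[A_n^\alpha B_n^{1-\alpha}]\le\Tr[A^\alpha B^{1-\alpha}]$.

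For the left sides, we first regularize. Replace $B$ by $B+\varepsilon I$ so that $A^\alpha+B^\alpha+tI$ is boundedly invertible for all $t\ge0$. Then $A_n\to A$ in trace norm, because $P_n$ commutes with $A$ if we take spectral projections of $A$ completed by the basis; in that case $A_n^\alpha=P_nA^\alpha P_n\to A^\alpha$ in norm. Also $B_n\to B$ strongly, and continuous functional calculus on bounded sets gives $B_n^\alpha\to B^\alpha$ strongly. The resolvents then converge strongly. Hence $B_n^{\alpha/2}(A_n^\alpha+B_n^\alpha+t)^{-1}A_n^{1/2}\to B^{\alpha/2}(A^\alpha+B^\alpha+t)^{-1}A^{1/2}$ in Hilbert--Schmidt norm, since $A_n^{1/2}\to A^{1/2}$ in $\norm{\cdot}_2$ and the other factors converge strongly and are uniformly bounded. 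Fatou's lemma applied to \eqref{eq:trace-integral} gives the lower semicontinuity. The symmetric quotient is handled the same way through \eqref{eq:extended-symmetric}. To remove $\varepsilon$, we use the monotone convergence in the strong integral \eqref{eq:extended-log}, or equivalently Fatou again as $\varepsilon\downarrow0$, together with $(B+\varepsilon)^{1-\alpha}\to B^{1-\alpha}$ on the right side.

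For the right sides, when $P_n$ commutes with $A$ we have $\Tr[A_n^\alpha B_n^{1-\alpha}]=\Tr[A^{\alpha/2}P_n(P_nBP_n)^{1-\alpha}P_nA^{\alpha/2}]$. Operator concavity of $x^{1-\alpha}$ (Hansen's inequality, $1-\alpha\in[0,1/2]$) gives $(P_nBP_n)^{1-\alpha}\ge P_nB^{1-\alpha}P_n$, which is the wrong direction. So instead of compressing $B$, we should compress $A$ only: take $A_n=P_nAP_n$ with $P_n$ spectral projections of $A$, and keep $B$. That does not yet give a finite-dimensional problem, so we additionally approximate $B$ from the finite-dimensional side using the order inequality we need. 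Alternatively we can avoid compressing $B$ entirely by truncating the space to $\mathcal K_n$ and using $\Tr[A_n^\alpha (P_nBP_n)^{1-\alpha}]$, where we need a limit, not an inequality. Because $A_n^{\alpha/2}$ converges in Hilbert--Schmidt norm and $(P_nBP_n)^{1-\alpha}\to B^{1-\alpha}$ strongly with uniform bound $\norm{B}^{1-\alpha}$, we get $\Tr[A_n^\alpha(P_nBP_n)^{1-\alpha}]\to\Tr[A^\alpha B^{1-\alpha}]$ directly. Strong convergence of $(P_nBP_n)^{1-\alpha}$ follows from strong convergence of $P_nBP_n$ by polynomial approximation of $x^{1-\alpha}$ on $[0,\norm B]$. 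This settles $\alpha<1$. At $\alpha=1$ the support projection $B^0$ is not a continuous function of $B$, so we obtain that case by letting $\alpha\uparrow1$ in the infinite-dimensional inequality, as in the matrix proof. The right side $\norm{B^{(1-\alpha)/2}A^{1/2}}_2^2\to\Tr[AB^0]$ by dominated convergence in the spectral representation of $B$. On the left side we again use Fatou/lower semicontinuity in $\alpha$.

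Optimality is immediate: embed the scalar examples of Theorem~\ref{thm:log} as $A=P$ and $B=bP$ for a rank-one projection $P$, or as $B^\alpha=\frac{1-s}{s}A^\alpha$ with $A=P$. The main obstacle we expect is the left-side lower semicontinuity, in particular justifying the strong convergence of the resolvents and of the fractional powers jointly, and the interchange with $\int_0^\infty dt$. Fatou requires only pointwise-in-$t$ convergence, and that is where we will concentrate the care.
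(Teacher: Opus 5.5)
Your lower-semicontinuity argument for the logarithmic term is fine. The right-side limit, however, has a genuine gap, and it is exactly the phenomenon in the paper's Remark: for trace-class $A$ and $\alpha<1$, $A^\alpha$ need not be trace class.

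\textbf{The right-side limit for $\alpha<1$.} Your step ``$A_n^{\alpha/2}$ converges in Hilbert--Schmidt norm'' presupposes $\Tr A^\alpha<\infty$. So your argument misses the case $\Tr A^\alpha=\infty>\Tr[A^\alpha B^{1-\alpha}]$. In that case simultaneous compression of $A$ and $B$ really fails, because the Hansen inflation $(P_nBP_n)^{1-\alpha}\ge P_nB^{1-\alpha}P_n$ you noticed can be unbounded. Here is a concrete example.
\begin{itemize}
\item Take an orthonormal eigenbasis with $Ae_j=j^{-1/\alpha}e_j$ and $Af_j=2^{-j/\alpha}f_j$ for $j\ge3$.
\item Let $B$ be the projection onto the closed span of $\psi_j=\sqrt{\delta_j}\,e_j+\sqrt{1-\delta_j}\,f_j$, where $\delta_j=(\log j)^{-2}$.
\item Let $P_n$ be the spectral projections of $A$ for eigenvalues $\ge\lambda$, with $\lambda\downarrow0$.
\end{itemize}
Then $\Tr[A^\alpha B^{1-\alpha}]=\sum_j(\delta_j/j+(1-\delta_j)2^{-j})<\infty$. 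Now set $N=\lambda^{-\alpha}$. Every block with $\log_2N<j\le N$ is cut, so $P_n\psi_j=\sqrt{\delta_j}\,e_j$. Each such block contributes $\delta_j^{1-\alpha}/j=j^{-1}(\log j)^{-2(1-\alpha)}$ to $\Tr[A_n^\alpha(P_nBP_n)^{1-\alpha}]$. Since $2(1-\alpha)\le1$, these window sums tend to $+\infty$ as $\lambda\downarrow0$.

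\textbf{The $\varepsilon$-regularization.} It breaks down for the same reason, since $\Tr[A^\alpha(B+\varepsilon)^{1-\alpha}]\ge\varepsilon^{1-\alpha}\Tr A^\alpha=\infty$. It is also unnecessary: Fatou only uses $t>0$, and the matrix results already allow singular matrices.

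\textbf{The endpoint $\alpha=1$.} It cannot be reached by $\alpha\uparrow1$. Take $B=I$ and let $A$ have eigenvalues $a_j=1/(j(\log j)^2)$. Then the mixed trace is $\Tr A^\alpha=\infty$ for every $\alpha<1$, while $\Tr[AB^0]=\Tr A<\infty$. So your dominated-convergence step has no summable majorant, and the limiting inequality is vacuous.

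\textbf{The missing idea: take the two limits one after the other, as the paper does.}
\begin{enumerate}
\item First let $A$ have finite rank and compress only $B$. Use finite-rank projections $E_m\uparrow I$ that contain $\ran A$ and commute with $B^0$.
\begin{itemize}
\item Since $A^\alpha$ is now finite rank, your strong-convergence argument does give convergence of the mixed traces.
\item At $\alpha=1$, positivity gives $\ker(E_mBE_m|_{E_m\mathcal H})=E_m\mathcal H\cap\ker B$. Hence $(E_mBE_m)^0=E_mB^0$ and $\Tr[AB_m^0]=\Tr[AB^0]$ exactly. The endpoint is thus proved directly, not as a limit in $\alpha$.
\end{itemize}
\item Then approximate a general $A$ by spectral truncations $A_n$, keeping $B$ uncompressed.
\begin{itemize}
\item Now $\Tr[A_n^\alpha B^{1-\alpha}]=\sum_{j\le n}a_j^\alpha\ip{e_j}{B^{1-\alpha}e_j}$ increases to $\Tr[A^\alpha B^{1-\alpha}]$. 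This includes the value $+\infty$ and the case $\alpha=1$.
\item So monotonicity replaces the unavailable Hilbert--Schmidt convergence, and the left sides are handled by Fatou as you propose.
\end{itemize}
\end{enumerate}

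\textbf{The symmetric quotient.} It is not ``handled the same way'', because there is no $t$-integral. You need strong convergence on $\supp(A+B)$ of the contractions $B_m^{\alpha/2}(A^\alpha+B_m^\alpha)^{-1/2}$, and then of $B^{\alpha/2}(A_n^\alpha+B^\alpha)^{-1/2}$. The paper derives this from \eqref{eq:symmetric-factor} on the dense range of $(A^\alpha+B^\alpha)^{1/2}$.
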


\begin{proof}
First let $A$ have finite rank. Choose increasing finite-rank projections
$E_m\to I$ strongly, commuting with $B^0$ and containing $\ran A$.
Such projections are obtained by including $B^0\ran A$ and
$(I-B^0)\ran A$, and enlarging separately in $B^0\mathcal H$ and
$(I-B^0)\mathcal H$. Set $B_m=E_mBE_m$, extended by zero on
$(E_m\mathcal H)^\perp$. Apply Theorem~\ref{thm:log} and
Proposition~\ref{cor:symmetric} on $E_m\mathcal H$.

The operators $B_m$ converge strongly to $B$ and are uniformly bounded;
the same holds for their positive powers and the corresponding resolvents.
The operators inside the norms in \eqref{eq:trace-integral} therefore
converge in Hilbert--Schmidt norm: multiplication on the right by the
fixed finite-rank $A^{1/2}$ turns bounded strong convergence into
Hilbert--Schmidt convergence. Fatou's lemma bounds the logarithmic
trace for $A,B$ by the lower limit of those for $A,B_m$.

For the symmetric quotient, \eqref{eq:symmetric-factor}, the norm
bound by one, and strong convergence of the square roots give
\[
B_m^{\alpha/2}(A^\alpha+B_m^\alpha)^{-1/2}
(A^\alpha+B^\alpha)^{1/2}h\longrightarrow B^{\alpha/2}h.
\]
The range of $(A^\alpha+B^\alpha)^{1/2}$ is dense in
$\supp(A+B)$. Thus the extended products
$B_m^{\alpha/2}(A^\alpha+B_m^\alpha)^{-1/2}$ converge strongly there
to $B^{\alpha/2}(A^\alpha+B^\alpha)^{-1/2}$.
Multiplication on the right by the finite-rank $A^{1/2}$ gives
Hilbert--Schmidt convergence, hence convergence of the symmetric traces.
For $\alpha<1$, finite rank of $A^\alpha$ gives convergence of the
mixed traces as well. At $\alpha=1$,
$\ker(B_m|_{E_m\mathcal H})=E_m\mathcal H\cap\ker B$ by positivity,
so $B_m^0=E_mB^0$ and $\Tr[AB_m^0]=\Tr[AB^0]$.
Both finite-rank bounds follow.

For general $A$, write its nonzero spectral part as
$A=\sum_j a_j\ip{e_j}{\,\cdot\,}e_j$ and set
$A_n=\sum_{j\le n}a_j\ip{e_j}{\,\cdot\,}e_j$. Then
\[
\norm{A_n^\alpha-A^\alpha}\to0,
\qquad \norm{A_n^{1/2}-A^{1/2}}_2\to0.
\]
The operators inside the norms in \eqref{eq:trace-integral} again converge
in Hilbert--Schmidt norm, so Fatou's lemma applies to the logarithmic
trace. The preceding argument on the dense range also gives strong
convergence of $B^{\alpha/2}(A_n^\alpha+B^\alpha)^{-1/2}$ to
$B^{\alpha/2}(A^\alpha+B^\alpha)^{-1/2}$ on $\supp(A+B)$.
Their norms are at most one, so finite-rank approximation of $A^{1/2}$
and $\norm{A_n^{1/2}-A^{1/2}}_2\to0$ give
\[
B^{\alpha/2}(A_n^\alpha+B^\alpha)^{-1/2}A_n^{1/2}
\longrightarrow B^{\alpha/2}(A^\alpha+B^\alpha)^{-1/2}A^{1/2}
\quad\text{in Hilbert--Schmidt norm}.
\]
This proves convergence of the symmetric traces. Finally,
\[
\Tr[A_n^\alpha B^{1-\alpha}]
=\sum_{j\le n}a_j^\alpha\ip{e_j}{B^{1-\alpha}e_j}
\uparrow\Tr[A^\alpha B^{1-\alpha}],
\]
also at $\alpha=1$ with $B^0$ the support projection.
Passing to the limit proves both inequalities, including an infinite
right side. Taking $A=aP$, $B=bP$ for a rank-one projection $P$ reduces
both inequalities to the scalar examples in Theorem~\ref{thm:log},
which prove sharpness.
\end{proof}

\begin{remark}[Finiteness and the endpoint]
For $\alpha<1$, the example
$A=\operatorname{diag}(j^{-1/\alpha})$, $B=I$ on $\ell^2(\mathbb N)$
has $\Tr A<\infty$ but $\Tr A^\alpha=\infty$.
If $A,B$ are both trace class, H\"older's inequality \cite{Simon} gives
$\Tr[A^\alpha B^{1-\alpha}]
\le(\Tr A)^\alpha(\Tr B)^{1-\alpha}$ for $1/2\le\alpha<1$.
Both endpoint bounds above are proved directly, without taking limits
of mixed traces that may be infinite for every $\alpha<1$.
\end{remark}

\subsection{Unbounded positive self-adjoint operators}\label{sec:unbounded}

Let $A$ be positive trace class and $B\ge0$ self-adjoint and densely
defined on $\mathcal H$. Set $B_N=\min(B,NI)$ by spectral calculus.
For $\alpha<1$, interpret the mixed trace as the quadratic-form trace
\begin{equation}\label{eq:unbounded-trace}
\Tr[A^\alpha B^{1-\alpha}]
:=\sup_{N\ge1}\Tr[A^{\alpha/2}B_N^{1-\alpha}A^{\alpha/2}],
\end{equation}
and retain $\Tr[AB^0]$ at $\alpha=1$.
The operator $A^\alpha+B^\alpha$ is positive self-adjoint on the
domain of $B^\alpha$; its quadratic form is defined on the domain of
$B^{\alpha/2}$.
In \eqref{eq:extended-log}, interpret the integrand as
\[
\bigl(B^{\alpha/2}(A^\alpha+B^\alpha+tI)^{-1}\bigr)^*
\bigl(B^{\alpha/2}(A^\alpha+B^\alpha+tI)^{-1}\bigr).
\]
Since $B^\alpha\le A^\alpha+B^\alpha$ in the form sense, this is
bounded, and \eqref{eq:contraction} and \eqref{eq:trace-integral} remain
valid. The same form inequality gives \eqref{eq:symmetric-factor}
for every $h$ in the domain of $B^{\alpha/2}$ and extends
$B^{\alpha/2}(A^\alpha+B^\alpha)^{-1/2}$ continuously to the support,
with value zero on its orthogonal complement. Thus
\eqref{eq:extended-symmetric} also applies, and both traces are at most $\Tr A$.

\begin{coro}[Unbounded extensions]\label{cor:unbounded}
For positive trace-class $A$ and positive self-adjoint $B$ on
$\mathcal H$, both \eqref{eq:log-main} and \eqref{eq:symmetric} hold
for $1/2\le\alpha\le1$, with the interpretations above and the same
optimal constants.
\end{coro}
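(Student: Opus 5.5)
The plan is to approximate $B$ by the bounded truncations $B_N=\min(B,NI)$ and apply Theorem~\ref{thm:infinite} to the pair $A,B_N$. That theorem gives, for each $N$,
\[
\Tr\!\left[A\,\D\log[A^\alpha+B_N^\alpha](B_N^\alpha)\right]\le s^s(1-s)^{1-s}\Tr[A^{\alpha/2}B_N^{1-\alpha}A^{\alpha/2}],
\]
and the analogous bound for the symmetric quotient. For $\alpha<1$ the right side is at most the supremum in \eqref{eq:unbounded-trace}. At $\alpha=1$ we have $B_N^0=B^0$, since $\min(\lambda,N)>0$ exactly when $\lambda>0$, so the right side is exactly $\Tr[AB^0]$. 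It therefore suffices to show that each left side for $B$ is bounded by the $\liminf$ of the left sides for $B_N$.

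For the logarithmic term I use \eqref{eq:trace-integral} together with Fatou's lemma in $t$. The key claim is that, for each fixed $t>0$,
\[
B_N^{\alpha/2}(A^\alpha+B_N^\alpha+tI)^{-1}A^{1/2}\longrightarrow B^{\alpha/2}(A^\alpha+B^\alpha+tI)^{-1}A^{1/2}
\]
in Hilbert--Schmidt norm. Since these operators are uniformly bounded by $t^{-1/2}$ and $A^{1/2}$ is Hilbert--Schmidt, it is enough to have strong convergence of $B_N^{\alpha/2}(A^\alpha+B_N^\alpha+tI)^{-1}$, and the following two steps supply it.

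First, $B_N^\alpha\uparrow B^\alpha$ in the form sense, with forms increasing to the closed form of $B^\alpha$. Adding the bounded $A^\alpha+tI$, the monotone convergence theorem for forms (Kato, or \cite{ReedSimon}, Simon's theorem) gives strong resolvent convergence: $R_N:=(A^\alpha+B_N^\alpha+tI)^{-1}\to R:=(A^\alpha+B^\alpha+tI)^{-1}$ strongly. Second, to pass the factor $B_N^{\alpha/2}$, I write $B_N^{\alpha/2}R_N=B_N^{\alpha/2}R_N^{1/2}\cdot R_N^{1/2}$. The first factor is a contraction. The second converges strongly to $R^{1/2}$, by continuity of the square root under strong convergence of uniformly bounded positive operators. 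For $g=R^{1/2}f$, which lies in the form domain of $B^{\alpha/2}$, I then need $B_N^{\alpha/2}R_N^{1/2}g\to B^{\alpha/2}R^{1/2}g$. Rather than argue this directly, I aim for weak convergence plus convergence of norms. The norm statement $\|B_N^{\alpha/2}R_Nf\|^2=\langle f,R_N B_N^\alpha R_N f\rangle$ is the delicate point. Because Fatou only needs a lower bound, however, weak convergence would already suffice: lower semicontinuity of the Hilbert--Schmidt norm under weak operator convergence gives
\[
\|X\|_2^2\le\liminf\|X_N\|_2^2 .
\]
So I will only prove weak convergence. For $h$ in the domain of $B^{\alpha/2}$,
\[
\langle h,B_N^{\alpha/2}R_Nf\rangle=\langle B_N^{\alpha/2}h,R_Nf\rangle\to\langle B^{\alpha/2}h,Rf\rangle ,
\]
using strong convergence of $B_N^{\alpha/2}h\to B^{\alpha/2}h$ by spectral calculus together with strong convergence of $R_N$. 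Uniform boundedness then extends this to all $h$.

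The symmetric quotient is handled the same way. I take $X_N=B_N^{\alpha/2}(A^\alpha+B_N^\alpha)^{-1/2}A^{1/2}$ and show weak convergence to the extended product, then apply lower semicontinuity of $\|\cdot\|_2$. For weak convergence I test against $h$ in the domain of $B^{\alpha/2}$ and vectors $(A^\alpha+B^\alpha)^{1/2}k$, using contractivity and \eqref{eq:symmetric-factor}. Since $(A^\alpha+B_N^\alpha)^{1/2}$ need not converge on such $k$ in a simple way, I plan instead to regularize with $+tI$: replace the inverse square root by $(A^\alpha+B_N^\alpha+tI)^{-1/2}$, use strong resolvent convergence and continuity of $x\mapsto(x+t)^{-1/2}$ to get weak convergence, then let $t\downarrow0$ by monotone convergence in $t$ on the limiting side. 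I expect this interchange of $t\downarrow0$ and $N\to\infty$ to be the main obstacle. Monotonicity of the quantity in $t$ for fixed $N$ should resolve it, since increasing $t$ decreases the quotient and hence bounds it by its $t=0$ value for $B_N$.

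Finally, sharpness follows from the same rank-one scalar examples as in Theorem~\ref{thm:infinite}, since bounded $B$ is a special case.
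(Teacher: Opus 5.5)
\paragraph{Verdict.} Your handling of the right sides, of $\alpha=1$ ($B_N^0=B^0$), and of the logarithmic term is sound. For the logarithmic term you use weak convergence of $B_N^{\alpha/2}(A^\alpha+B_N^\alpha+tI)^{-1}$ plus lower semicontinuity of $\norm{\cdot}_2$. This legitimately replaces the paper's strong convergence of the integrands via $R_NB_N^\alpha R_N=R_N-tR_N^2-R_NA^\alpha R_N$, because Fatou only needs a lower bound.

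\paragraph{The gap.} It is in the symmetric quotient, at exactly the step you flagged. Your interchange of $t\downarrow0$ and $N\to\infty$ rests on the claim that
\[
S_N(t)=\norm{B_N^{\alpha/2}(A^\alpha+B_N^\alpha+tI)^{-1/2}A^{1/2}}_2^2
\]
is nonincreasing in $t$, and this is false. In $\mathbb C^2$ with $\alpha=1$, take
\[
A=\begin{pmatrix}1&2\\2&4\end{pmatrix},\qquad
B=\begin{pmatrix}1&-2\\-2&4\end{pmatrix},\qquad
A+B=\operatorname{diag}(2,8).
\]
Then
\[
S(t)=\frac{1}{2+t}+\frac{16}{8+t}-\frac{8}{\sqrt{(2+t)(8+t)}},
\]
so $S(0)=1/2$ but $S(1)\approx0.572$. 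Since $B$ is bounded, $B_N=B$ for large $N$, so $S_N(t)\le S_N(0)$ fails. Using these matrices as $A^\alpha,B^\alpha$ gives the same failure for every $\alpha$, because a rank-one $A$ is only rescaled.

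The reason is structural. For invertible $C=A^\alpha+B^\alpha$ one has $(C+tI)^{-1/2}B^\alpha(C+tI)^{-1/2}=D_t\,(C^{-1/2}B^\alpha C^{-1/2})\,D_t$ with the contraction $D_t=C^{1/2}(C+tI)^{-1/2}$. Conjugation by a contraction need not decrease a positive operator, nor its pairing with a noncommuting $A$. Your ``monotone convergence in $t$ on the limiting side'' is unjustified for the same reason.

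\paragraph{Two ways to close it.} What is missing is either convergence at $t=0$ itself or a bound on $S_N(t)-S_N(0)$ that is uniform in $N$.

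\emph{The paper's route.} The paper supplies the first option, which you set aside. For $h$ in the domain of $B^\alpha$ it proves $(A^\alpha+B_N^\alpha)^{1/2}h\to(A^\alpha+B^\alpha)^{1/2}h$. It truncates $x^{1/2}$ at level $\eta$ and bounds the discarded part by $\eta^{-1/2}(\norm{A^\alpha}\norm h+\norm{B^\alpha h})$, uniformly in $N$. Combined with \eqref{eq:symmetric-factor} and the contraction bound, this gives strong convergence of $B_N^{\alpha/2}(A^\alpha+B_N^\alpha)^{-1/2}$ on the dense set of vectors $(A^\alpha+B^\alpha)^{1/2}h$. Hence it converges on the support, and after right multiplication by $A^{1/2}$ it converges in Hilbert--Schmidt norm.

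\emph{Repairing your regularization.} Replace monotonicity by a uniform estimate. Write $C_N=A^\alpha+B_N^\alpha$ and let $Y_N$ be the extended contraction. Then \eqref{eq:symmetric-factor} gives $B_N^{\alpha/2}(C_N+tI)^{-1/2}=Y_N\,C_N^{1/2}(C_N+tI)^{-1/2}$. Using $\bigl(1-\sqrt{x/(x+t)}\bigr)^2\le t/(x+t)$ and $(C_N+tI)^{-1}\le(A^\alpha+tI)^{-1}$, one gets
\[
\norm{B_N^{\alpha/2}(C_N+tI)^{-1/2}A^{1/2}-Y_NA^{1/2}}_2^2
\le\Tr\!\left[tA(A^\alpha+tI)^{-1}\right]\longrightarrow0
\qquad(t\downarrow0),
\]
uniformly in $N$, and likewise for $B$ itself. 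This legitimizes the interchange of limits that your argument needs.
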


\begin{proof}
For every $t>0$,
\[
(A^\alpha+B_N^\alpha+tI)^{-1}
\longrightarrow(A^\alpha+B^\alpha+tI)^{-1}\quad\text{strongly}.
\]
Indeed, spectral calculus gives strong convergence of
$(B_N^\alpha+tI)^{-1}$, and a Neumann expansion handles the bounded
perturbation $A^\alpha$ for $t>\norm{A^\alpha}$.
The resolvent identity then gives convergence for all $t>0$;
see \cite[Section~VIII.7]{ReedSimon}.
The identity
\begin{align*}
&(A^\alpha+B_N^\alpha+tI)^{-1}B_N^\alpha(A^\alpha+B_N^\alpha+tI)^{-1}\\
&\quad=(A^\alpha+B_N^\alpha+tI)^{-1}
-t(A^\alpha+B_N^\alpha+tI)^{-2}\\
&\qquad\quad-(A^\alpha+B_N^\alpha+tI)^{-1}A^\alpha
(A^\alpha+B_N^\alpha+tI)^{-1}
\end{align*}
gives strong convergence of the positive integrands. They are uniformly
bounded for each $t$, so their pairings with trace-class $A$ converge.
Fatou's lemma therefore bounds the logarithmic trace by the lower
limit of the corresponding traces with $B_N$.

For the symmetric quotient, let $h$ belong to the domain of $B^\alpha$.
Spectral calculus gives $B_N^{\alpha/2}h\to B^{\alpha/2}h$. Also,
\begin{equation}\label{eq:root-core}
(A^\alpha+B_N^\alpha)^{1/2}h
\longrightarrow(A^\alpha+B^\alpha)^{1/2}h.
\end{equation}
To justify this, truncate $x^{1/2}$ to $\min(x^{1/2},\eta^{1/2})$ for some $\eta>0$.
Strong resolvent convergence gives convergence for the bounded
truncations \cite[Theorem~VIII.20(b), p.~286]{ReedSimon}.
The discarded part on $h$ has norm at most
$\eta^{-1/2}\norm{(A^\alpha+B_N^\alpha)h}$, uniformly tending to zero
as $\eta\to\infty$, because
$\norm{(A^\alpha+B_N^\alpha)h}
\le\norm{A^\alpha}\norm h+\norm{B^\alpha h}$.
The same estimate applies to the limit operator.

Equations~\eqref{eq:symmetric-factor} and \eqref{eq:root-core}, together
with the norm bound by one, give strong convergence of
$B_N^{\alpha/2}(A^\alpha+B_N^\alpha)^{-1/2}$ to
$B^{\alpha/2}(A^\alpha+B^\alpha)^{-1/2}$ on vectors of the form
$(A^\alpha+B^\alpha)^{1/2}h$, with $h$ in the domain of $B^\alpha$.
These vectors are dense in the support by spectral cutoffs for
$A^\alpha+B^\alpha$. The convergence therefore holds throughout the
support. Finite-rank approximation of $A^{1/2}$ gives
Hilbert--Schmidt convergence after multiplication by $A^{1/2}$,
and hence convergence of the symmetric traces.
Apply Theorem~\ref{thm:infinite} to $A,B_N$ and pass to the limit:
the mixed traces increase to \eqref{eq:unbounded-trace} for $\alpha<1$,
and $B_N^0=B^0$ at $\alpha=1$.
Rank-one examples again establish both optimal constants.
\end{proof}

\appendix
\section{Proof of the averaged derivative identity}\label{app:identity}

Fix $u_0\ge0$ and a compact neighborhood on which $B+uI>0$.
The spectra of
$\Gamma_{v,B+uI\mid A}$ and $A^{-1/2}(B+uI)A^{-1/2}$ then lie in one
compact interval $J\subset(0,\infty)$, uniformly in $u$ and $v\in[0,1]$.
Indeed, the lower and upper spectral bounds for either operator are
$\lambda_{\min}(B+uI)/\norm A$ and
$\norm{B+uI}/\lambda_{\min}(A)$.

The Dalecki\u\i--Kre\u\i n formula
\cite[Theorem~5.3.1, p.~154]{Bhatia}, \cite[Chapter~3]{Higham}, in an
eigenbasis of a Hermitian $X$ with spectrum in $J$, is
\begin{equation}\label{eq:DK}
\bigl(\D f[X](H)\bigr)_{ij}=f^{[1]}(x_i,x_j)H_{ij},
\qquad
f^{[1]}(x,y)=
\begin{cases}
\dfrac{f(x)-f(y)}{x-y},&x\ne y,\\[3pt]
f'(x),&x=y.
\end{cases}
\end{equation}
Consequently, on either finite-dimensional space used here,
\begin{equation}\label{eq:DK-bound}
\norm{\D f[X](H)}_2\le\sup_J|f'|\,\norm H_2.
\end{equation}
Uniform polynomial approximation of $f'$ and \eqref{eq:DK-bound} also
show that $\D f[X]$ depends continuously on $X$. Since
$\partial_u\Gamma_{v,B+uI\mid A}=\mathcal L_v(A)^{-1}$ is uniformly
bounded in $v$, the chain rule and \eqref{eq:DK-bound} justify
differentiation under the $v$ integral. Both sides of
\eqref{eq:derivative-identity}, at fixed $u$, are therefore continuous
linear functionals of $f$ in the $C^1(J)$ norm.

We verify their equality first for $f(x)=(x+z)^{-1}$, $z>0$.
The identities
\[
(\Gamma_{v,B+uI\mid A}+zI)^{-1}
=\mathcal L_v(A)^{1/2}\mathcal L_v(B+uI+zA)^{-1}\mathcal L_v(A)^{1/2},
\qquad
\mathcal L_v(A)^{1/2}(A^{(1+r)/2})=A^{1+r/2}
\]
give
\begin{align}
&\frac{\mathrm d}{\mathrm du}\int_0^1
\ip{A^{(1+r)/2}}{(\Gamma_{v,B+uI\mid A}+zI)^{-1}(A^{(1+r)/2})}\dd v
\notag\\
&\qquad=-\int_0^1
\ip{A^{1+r/2}}{\mathcal L_v(B+uI+zA)^{-2}(A^{1+r/2})}\dd v.
\label{eq:inverse-test}
\end{align}
For $X>0$, diagonalization and
$\int_0^1((1-v)x_i+vx_j)^{-2}\dd v=(x_ix_j)^{-1}$ imply
\[
\int_0^1\mathcal L_v(X)^{-2}(H)\dd v=X^{-1}HX^{-1}.
\]
Thus \eqref{eq:inverse-test} equals
\[
-\Tr\!\left[A^{1+r/2}(B+uI+zA)^{-1}
A^{1+r/2}(B+uI+zA)^{-1}\right].
\]
Substitute
$(B+uI+zA)^{-1}
=A^{-1/2}(A^{-1/2}(B+uI)A^{-1/2}+zI)^{-1}A^{-1/2}$.
Trace cyclicity and $\D f[X](H)=-(X+zI)^{-1}H(X+zI)^{-1}$
identify this expression with the right side of
\eqref{eq:derivative-identity}.

For $z>\max J$, the expansion
$(x+z)^{-1}=\sum_{k\ge0}(-1)^kx^kz^{-k-1}$ converges in $C^1(J)$:
the series and its $x$ derivative converge absolutely and uniformly.
Continuity permits applying either functional termwise; uniqueness of
power series in $z^{-1}$ gives equality for monomials and polynomials.
Integrating uniform polynomial approximations to $f'$ and matching
one value gives polynomials $p_m\to f$ in $C^1(J)$, so continuity passes
the identity to $f$. Since $u_0$ was arbitrary, this proves
Lemma~\ref{lem:identity}, including the right derivative at zero.

\section{Proof of the power-derivative estimate}\label{app:derivative}

For $X,Y>0$, the weighted geometric mean is
\[
X\#_tY=X^{1/2}(X^{-1/2}YX^{-1/2})^tX^{1/2}
=Y\#_{1-t}X,\qquad 0\le t\le1;
\]
see \cite{Bhatia}. We write $X\preceq_{\log}Y$ when the products of
the largest $k$ eigenvalues satisfy the corresponding inequalities
for every $k$, with equality for the product of all eigenvalues.
Log-majorization is preserved by positive powers and implies
$\Tr X\le\Tr Y$.

We use the following three results in precise forms. Epstein's
concavity theorem \cite{Epstein}, also stated as
\cite[Corollary~1.10]{CarlenZhang}, says that, for fixed $K$ and $0<s<1$,
\begin{equation}\label{eq:epstein-functional}
	X\mapsto\Tr[(K^*X^sK)^{1/s}]
\end{equation}
is concave. Ando--Hiai log-majorization \cite{AndoHiai} gives
\begin{equation}\label{eq:ando-hiai}
	X\#_tY\preceq_{\log}(X^s\#_tY^s)^{1/s},
	\qquad 0<s<1,\quad 0\le t\le1.
\end{equation}
Finally, the reversed generalized Bebiano--Lemos--Provid\^encia
inequality, in \cite{BLP}, \cite[(14)]{LiuCheng}, yields
\begin{equation}\label{eq:reversed-gblp}
	X^{-1}(X^{1/2}YX^{1/2})^{1-s}X^{-1}
	\preceq_{\log}X^{-(1+s)/2}Y^{1-s}X^{-(1+s)/2}.
\end{equation}
Here the parameters are $(p,q,r)=(1,1-s,-2)$, which satisfy
$0\le q<p$ and $-r\ge q$. We have written the positive symmetrized
representatives of the products appearing in that reference; these
have the same eigenvalues as the corresponding nonsymmetric products.

\begin{proof}[Proof of Lemma~\ref{lem:derivative}]
	Fix $A,B>0$ and choose
	\[
	K=B^{-s/2}\left(B^{s/2}A^{s/2}B^{s/2}\right)^{s/2}.
	\]
	Then
	\begin{equation}\label{eq:K-identities}
		KK^*=B^{-s}\#_sA^{s/2},\qquad
		K(K^*B^sK)^{1/s-1}K^*=A^{s/2}.
	\end{equation}
	Keep this $K$ fixed when differentiating the concave, degree-one
	homogeneous map in \eqref{eq:epstein-functional}. Its supporting-hyperplane
	inequality at $B$ bounds its value at $A^{s/2}$ by its derivative at
	$B$ in direction $A^{s/2}$. The trace chain rule and
	\eqref{eq:K-identities} therefore yield
	\begin{align}
		\frac1s\Tr[A^{s/2}\D_s[B](A^{s/2})]
		&\ge\Tr[(K^*A^{s^2/2}K)^{1/s}]\notag\\
		&=\Tr\!\left[
		\left(A^{s^2/4}(B^{-s}\#_sA^{s/2})A^{s^2/4}\right)^{1/s}
		\right].
		\label{eq:epstein}
	\end{align}
	The last equality uses the equality of the eigenvalues of
	$K^*A^{s^2/2}K$ and $A^{s^2/4}KK^*A^{s^2/4}$.
	
	Next, \eqref{eq:ando-hiai} and \eqref{eq:reversed-gblp} imply
	\begin{align}
		A\#_{1-s}B^{-1}
		&\preceq_{\log}(A^s\#_{1-s}B^{-s})^{1/s}\notag\\
		&\preceq_{\log}
		\left(A^{s^2/4}(B^{-s}\#_sA^{s/2})A^{s^2/4}\right)^{1/s}.
		\label{eq:mean-chain}
	\end{align}
	To verify the second step, substitute
	$X=A^{-s/2}$ and $Y=A^{-s/4}B^{-s}A^{-s/4}$ into
	\eqref{eq:reversed-gblp}. Its left side is $A^s\#_{1-s}B^{-s}$;
	its right side is
	\[
	A^{s(1+s)/4}(A^{-s/4}B^{-s}A^{-s/4})^{1-s}A^{s(1+s)/4}
	=A^{s^2/4}(B^{-s}\#_sA^{s/2})A^{s^2/4}.
	\]
	Taking the power $1/s$ preserves log-majorization. Taking traces in
	\eqref{eq:mean-chain} and using \eqref{eq:epstein} gives
	\[
	\Tr\!\left[A(A^{1/2}BA^{1/2})^{s-1}\right]
	=\Tr[A\#_{1-s}B^{-1}]
	\le\frac1s\Tr[A^{s/2}\D_s[B](A^{s/2})],
	\]
	which is \eqref{eq:derivative}.
\end{proof}

%\clearpage

\end{document}